\newtheorem{theorem}{Theorem}[section]
\newtheorem{lemma}[theorem]{Lemma}
\newtheorem{corollary}[theorem]{Corollary}
\newtheorem{proposition}[theorem]{Proposition}
\theoremstyle{definition}
\newtheorem{definition}[theorem]{Definition}
\theoremstyle{remark}
\newtheorem{remark}[theorem]{Remark}
\numberwithin{equation}{section}
\newcommand{\on}[1]{\operatorname{#1}}
\newcommand{\abs}[1]{\lvert#1\rvert}
\newcommand{\mc}[1]{\mathcal{#1}} 
\newcommand{\mf}[1]{\mathfrak{#1}} 
\newcommand{\bd}{\bullet} 
\newcommand{\lieg}{\mathfrak{g}} 
\newcommand{\ten}{\otimes} 
\newcommand{\bds}{\bigoplus} 
\newcommand{\bten}{\bigotimes} 
\newcommand{\fk}{\mathbb{K}} 
\newcommand{\ff}{\mathbb{F}} 
\newcommand{\ra}{\rightarrow} 
\newcommand{\iso}{\simeq} 
\newcommand{\pr}[1]{#1^{\prime}}
\newcommand{\GL}{\on{GL}}
\begin{document}
\title[Block Basis]{Block Basis for Coinvariants of Modular Pseudo-reflection Groups}
\author{Ke Ou} \address[Ke Ou]{School of Statistics and Mathematics, Yunnan University of Finance and Economics, Kunming 650221, China} \email{keou@ynufe.edu.cn}
\subjclass[2010]{13A50; 17B50}
\keywords{Modular invariant theory, Pseudo-reflection group, block basis, coinvariants.}
\date{\today}
\begin{abstract}
	As a sequel of \cite{Ou}, in this shot note, we investigate block basis for coinvariants of finite modular pseudo-reflection groups.
	We are particularly interested in the case where $ G $ is a subgroup of the parabolic subgroups of $ \GL_n(q) $ which generalizes the Weyl groups of restricted Cartan type Lie algebra.	
\end{abstract}
\maketitle
\section{Introduction}
Let $ p $ be a fixed prime and $ \ff_q $ be the finite field with $ q = p^r $ for some $ r\geq 1. $ The finite
general linear group $ \GL_n(q) $ acts naturally on the symmetric algebra $ \mc{P}:=S^{\bd}(V) $ where $ V=\ff_q^n $ is the natural $ \GL_n(q) $ -module. 
If $ G $ is any subgroup of $ \GL_n(q), $ then we denote by $ \mc{P}^G $ the ring of $ G $-invariant polynomials. The ring of coinvariants, which we denote by $ \mc{P}_G, $ is the quotient of $ \mc{P} $ by the
ideal generated by the homogeneous elements in $ \mc{P}^G $ with positive degree.

A block basis for $ \mc{P}_G $ (see \ref{def} for definition) is a nice basis consisting of the monomial factors of a single monomial. Such basis simplify computations in many ways. Generally, the combinatorics involved in writing a polynomial or the products of elements in the block basis in terms of such a basis is less complicated since we are dealing with monomials. As an application, the authors of \cite{CHSW} use the block basis to determine the image of the transfer over some specific group G. 

The $ \GL_n(q) $ invariants in $ \mc{P}$ are determined by Dickson \cite{Di}. The block basis for $ \mc{P}_{\GL_n(q)} $ is investigated by Steinberg \cite{St} and Campell-Hughes-Shank-Wehlau \cite{CHSW}.
For a composition $ I=(n_1,\cdots,n_l) $ of $ n, $ let $ \GL_I $ be the parabolic subgroup associated to $ I $. Generalizing \cite{Di}, Kuhn and Mitchell \cite{KM} showed that the algebra $ \mc{P}^{\GL_I} $ is a polynomial algebra in $ n $ explicit generators. 

Let $ G_I $ and $ U_I $ be a subgroup of $ \GL_I $ with forms  
\begin{equation}\label{G}
G_I=\left( 
\begin{matrix}G_1 & * & \cdots & *\\ 
0 & G_2 &\cdots & *\\
\vdots & \vdots & \vdots & \vdots\\
0 & 0 & \cdots & G_l
\end{matrix}
\right)
\text{ and }
U_I=\left( 
\begin{matrix}I_{n_1} & * & \cdots & *\\ 
0 & I_{n_2} &\cdots & *\\
\vdots & \vdots & \vdots & \vdots\\
0 & 0 & \cdots & I_{n_l}
\end{matrix}
\right) \text{ respectively,}
\end{equation}
such that $  G_i< \GL_{n_i}(q) $ for all $ i $ where $ I_j $ is the identity matrix of $ \GL_j(q). $

Note that $ G_I $ is a  generalization of  $ \GL_I $ as well as the Weyl groups of Cartan type Lie algebras. Precisely, $ G_I=\GL_I $ if $ G_i= \GL_{n_i}(q) $ for all $ i. $ 
And $ G_I $ is a Weyl group of Cartan type Lie algebras if $ l=2,\ q=p,$  $ G_1=\GL_{n_1}(q),\ G_2=S_{n_2} $ or $ G(2,1,n_2) $ (cf. \cite{Je}). From the viewpoint of representation theory, the coinvariants of Weyl group of Lie algebra $ \lieg $ are providing very interesting yet limited answers to the problem of understanding $ \lieg $ modules. For example, Jantzen describes the basic algebra of block with regular weight by using coinvariants of Weyl group in \cite[Proposition 10.12]{Jan1}. 

Meanwhile, $ G_I $ is a modular finite pseudo-reflection group  if $ l\geq 2 $ and all $ G_i $ are pseudo-reflection groups since $ p\mid |U_I| $ (subsection \ref{PRG}). The invariants and coinvariants for a modular pseudo-reflection group can be quite complicate (see \cite{Na} for example). Our investigation generalizes the results for modular coinvariants by Steinberg \cite{St} and Campell-Hughes-Shank-Wehlau \cite{CHSW}.

In \cite{Ou}, we study $ \mc{P}^{U_I} $ and $ \mc{P}^{G_I} $ which are in turns out to be polynomial algebras. In this short note, we are interested in the block basis for the coinvariants $ \mc{P}_{U_I} $ (Theorem \ref{U_I}) and $ \mc{P}_{G_I} $ (Theorem \ref{6.4}). As an application, the block basis for $ \mc{P}_{G_I}$ are described precisely when $ G_i $ is $ \GL_{n_i}(q)$ and $ G(m,a,n_i) $ in Theorem \ref{5.6}, Proposition \ref{6.5} and Proposition \ref{6.7}. Our approach is in turn built on \cite{CHSW} where the authors describe sufficient conditions for the existence of a block basis for $ \mc{P}_G. $

The paper is organized as follows. In Section 2 and 3, we review some needed results from \cite{CHSW,Di,He,Ka,KM,MT,Ou}. Section 4 deals with the block basis for coinvariants of $ {G(m,a,n)} $. A block basis for $ \mc{P}_{U_I} $ are given in Section 5 and section 6 describes the block basis for $ \mc{P}_{G_I}. $

\section{Preliminary}
\subsection{}
Set $ m_0=0$ and $ m_k=\sum_{i=1}^{k}n_i,\ k=1,\cdots,l. $ 
For each $ 1\leq s\leq n, $ define $$ \tau(s)=m_j \text{ if } m_j<s\leq m_{j+1}. $$

Recall the definition of $ G_I $ and $ U_I $ in \ref{G}. Then $ G_I=L_I\ltimes U_I, $  where \[
L_I=\left( 
\begin{matrix}G_1 & 0 & \cdots & 0\\ 
0 & G_2 &\cdots & 0\\
\vdots & \vdots & \vdots & \vdots\\
0 & 0 & \cdots & G_l
\end{matrix}
\right),
\]

Suppose $ V=\langle x_1,\cdots,x_n\rangle_{\ff_q}, $ the symmetric algebra  $ S^\bd (V) $ will be identified with $ \ff_q[x_1,\cdots, x_n] $. Namely, $ \mc{P}=\ff_q[x_1,\cdots, x_n]. $ 

\subsection{Pseudo-reflection groups}\label{PRG}
In this subsection, we will recall some basic facts for pseudo-reflection groups. More details refer to \cite{Ka}.

For a finite dimensional vector space $ W $ over $ \ff_q $, a pseudo-reflection is a linear isomorphism $ s: W \ra W $ that is not the
identity map, but leaves a hyperplane $  H\subseteq W $ pointwise invariant. $ G \subseteq \GL(W) $ is a pseudo-reflection group if $ G $ is generated by its
pseudo-reflections. We call $ G $ is non-modular if $ p\not| \abs{G} $ while $ G $ is modular otherwise. 
\begin{lemma}\cite[Lemma 2.1]{Ou}
	$ G_I $ is a finite pseudo-reflection group if all $ G_i $ are finite pseudo-reflection groups.
\end{lemma}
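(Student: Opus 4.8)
The plan is to exhibit enough pseudo-reflections inside $G_I$ to generate it, working from the semidirect product decomposition $G_I = L_I \ltimes U_I$ recalled just above. First I would record the two obvious families of pseudo-reflections living in $G_I$. On one hand, $U_I$ is the group of upper unitriangular matrices with block pattern \eqref{G}, and its elementary root subgroups — matrices of the form $I_n + c E_{st}$ with $s \le m_j < t$ for suitable $j$ (so the $(s,t)$ entry lies in a starred block), $c \in \ff_q^\times$ — are transvections, hence pseudo-reflections fixing the hyperplane $\{x_t = 0\}$ pointwise. A standard commutator computation shows these elementary transvections generate all of $U_I$: any block-upper-unitriangular matrix can be cleared to the identity by successively subtracting multiples of one row from another, i.e. by multiplying by such transvections. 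On the other hand, each $G_i < \GL_{n_i}(q)$ is assumed to be a pseudo-reflection group, so $G_i$ is generated by its pseudo-reflections in $\GL(\ff_q^{n_i})$; embedding these block-diagonally into $L_I \subset G_I$, a pseudo-reflection of $G_i$ becomes a linear map on $V = \ff_q^n$ that still fixes a hyperplane pointwise (the hyperplane of $\ff_q^{n_i}$ fixed by the reflection, direct-summed with all the other coordinate blocks), hence is again a pseudo-reflection in $\GL(V)$.

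Next I would argue that these two families together generate $G_I$. Given $g \in G_I$, project to $L_I$ along $U_I$ using the splitting $G_I = L_I \ltimes U_I$; this projection lands in $L_I = G_1 \times \cdots \times G_l$, which by the previous paragraph is generated by block-diagonal pseudo-reflections of $G_I$. Multiplying $g$ by an appropriate product of those, we reduce to the case $g \in U_I$, which is generated by the elementary transvections above. Hence $G_I$ is generated by pseudo-reflections, i.e. it is a pseudo-reflection group; finiteness is immediate since $G_I \subseteq \GL_n(q)$ and $\GL_n(q)$ is finite, or alternatively since $|G_I| = |U_I| \cdot \prod_i |G_i|$ is a product of finite orders.

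The only genuinely delicate point is checking that the block-diagonal image of a pseudo-reflection $s \in G_i$ really is a pseudo-reflection of $\GL(V)$ and not the identity — but $s \ne \mathrm{id}$ on $\ff_q^{n_i}$ forces its block-diagonal embedding to be $\ne \mathrm{id}$ on $V$, and it fixes pointwise the codimension-one subspace $H \oplus \bigoplus_{j \ne i}\ff_q^{n_j}$ where $H$ is the reflecting hyperplane of $s$ in $\ff_q^{n_i}$; so the claim holds verbatim. I expect the main (still routine) obstacle to be the bookkeeping in the commutator/row-reduction argument that the elementary transvections generate $U_I$ when the blocks $n_i$ have size greater than one, since one must be careful that the only admissible $(s,t)$ positions are those strictly above the block diagonal and that reducing in the right order never introduces entries outside the allowed starred region. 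This is a standard fact about block-unitriangular groups, so I would state it and refer to a standard reference rather than write out the induction.
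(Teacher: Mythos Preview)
Your argument is correct, but note that the present paper does not actually prove this lemma: it is quoted verbatim from \cite[Lemma~2.1]{Ou} with no proof given here, so there is nothing in-paper to compare your proposal against. The approach you outline --- generating $U_I$ by the elementary transvections $I_n + cE_{st}$ with $(s,t)$ strictly above the block diagonal, generating $L_I = G_1 \times \cdots \times G_l$ by the block-diagonally embedded pseudo-reflections of the $G_i$, and then invoking $G_I = L_I \ltimes U_I$ --- is the natural one and is almost certainly what the cited reference does. One cosmetic quibble: depending on the convention for how $\GL_n(q)$ acts on $V$ (and the paper's invariant computations in \eqref{U_I on A} suggest the upper-unitriangular group fixes the \emph{early} variables), the hyperplane fixed by $I_n + cE_{st}$ may be cut out by the $s$-th coordinate rather than the $t$-th; this does not affect the validity of the argument.
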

\begin{remark}
	As a corollary, the Weyl groups of restricted Cartan type Lie algebras with type $ W,\ S $ and $ H $ are modular pseudo-reflection groups (\cite{Je}).
\end{remark}

\subsection{block basis}
In this subsection, we will list some results of block basis for coinvariants. More details refer to \cite{CHSW}.

A homogeneous system of parameters for $ \mc{P}$ is any collection of homogeneous
elements $ {e_1,\cdots, e_n} $ with the property that they generate a polynomial
subalgebra $ \mc{A} = \fk[ f_1,\cdots,f_n] $ over which $ \mc{P}$ is a free $ \mc{A} $-module. Moreover, a basis for $ \mc{P}$ as a free module over $ \mc{A} $ is any set of elements of $ \mc{P}$ that projects to a $ \fk $-basis for $ P/\mc{I} $ where $ \mc{I}=(\mc{A}_+) $ is the ideal generated by $ \{ f_1,\cdots,f_n \}. $ Denote $ d_i $ the degree of $ f_i $ for $ i=1,\cdots ,n. $

\begin{definition}\cite[Definition 1.1]{CHSW}\label{def}
	Let $ f\in \mc{P}$ be a monomial. We say \textit{$ f $ generates a block basis for $ \mc{P}$ over $ \mc{A} $} (or {\it for $ \mc{P}/\mc{I} $}) if the set of all monomial factors of $ f $ is a basis of vector space $ \mc{P}/\mc{I}. $ Such a basis consisting of all the monomial factors of a single monomial is called a \textit{block basis}.
\end{definition}

In \cite{CHSW}, the authors provide sufficient conditions for the existence of a
block basis for $ P/\mc{I}. $ Note that the symmetric group $ S_n $ acts on $ \mc{P}$ by
permuting the variables. Let $ \Sigma (\mc{A}) $ be the subgroup of $ S_n $ fixing
$ \mc{A} $ pointwise. Denote $ a\equiv b $ if $ a-b\in \mc{I} $.

\begin{definition}\cite[Definition 4.1]{CHSW}
	We will say that $ w = x_1^{m_1}\cdots x_n^{m_n} $ is a critical monomial
	associated to $ \alpha=x_1^{t_1}\cdots x_n^{t_n}, $ if:
	\begin{enumerate}
		\item for every $ g \in \Sigma(\mc{A}), $ there is an $ i $ such that $ m_i > t_{g(i)} $;
	   \item  no proper factor of $ w $ satisfies the first condition.
\end{enumerate}
\end{definition}

Suppose $ \Sigma(\mc{A})=S_n $ and the $ t_i $ form a decreasing sequence. In this case, the critical monomials associated to $ \alpha $ are $ (x_1\cdots x_i)^{t_i+1}. $

\begin{lemma}\cite[Theorem 4.1]{CHSW}\label{critical}
	Suppose $ \alpha=x_1^{t_1}\cdots x_n^{t_n} $ generates a block basis. Then the sequence $ t_i $ is a permutation of the sequence $ d_i-1 $ and the degree of $ \alpha $ is $ \sum_{i=1}^nd_i-n. $
	Furthermore, if $ g\in\Sigma(\mc{A}), $ then $ g(\alpha) $ also generates a block basis.
\end{lemma}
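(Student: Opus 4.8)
The idea is that everything follows from a single Hilbert-series comparison, after which only elementary bookkeeping remains. Write $\on{Hilb}(M)=\sum_{d\ge 0}(\dim_\fk M_d)\,t^d\in\fz[[t]]$ for a graded $\fk$-vector space with finite-dimensional graded pieces. First I would record the Hilbert series of $\mc{P}/\mc{I}$ in two ways. On one hand, choosing a homogeneous $\mc{A}$-basis $\{b_j\}$ of the free module $\mc{P}$ gives $\mc{P}/\mc{I}=\mc{P}/\mc{A}_+\mc{P}\cong\bigoplus_j\fk\,b_j$ as graded vector spaces, so $\on{Hilb}(\mc{P}/\mc{I})=\on{Hilb}(\mc{P})/\on{Hilb}(\mc{A})=\prod_{i=1}^n\frac{1-t^{d_i}}{1-t}$, using $\on{Hilb}(\mc{P})=(1-t)^{-n}$ and $\on{Hilb}(\mc{A})=\prod_i(1-t^{d_i})^{-1}$ (the latter because $\mc{A}$ is a polynomial ring on generators of degrees $d_i$). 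On the other hand, since by hypothesis the monomial factors of $\alpha=x_1^{t_1}\cdots x_n^{t_n}$ form a $\fk$-basis of $\mc{P}/\mc{I}$ and are homogeneous, the graded dimension of $\mc{P}/\mc{I}$ in degree $d$ is the number of factors of $\alpha$ of degree $d$, whence $\on{Hilb}(\mc{P}/\mc{I})=\sum_{\beta\mid\alpha}t^{\deg\beta}=\prod_{i=1}^n(1+t+\dots+t^{t_i})=\prod_{i=1}^n\frac{1-t^{t_i+1}}{1-t}$.

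Comparing the two expressions and clearing the denominator $(1-t)^n$ yields the polynomial identity $\prod_{i=1}^n(1-t^{d_i})=\prod_{i=1}^n(1-t^{t_i+1})$ \emph{in $\fz[t]$}. The crux is to deduce from such an identity that $(d_1,\dots,d_n)$ is a permutation of $(t_1+1,\dots,t_n+1)$. I would argue by induction on $n$: if $a$ is the smallest exponent appearing on the left, with multiplicity $\mu$, then the lowest positive-degree term of $\prod_i(1-t^{a_i})$ is $-\mu\,t^a$, and $-\mu\ne 0$ in $\fz$; reading this term off both sides forces the two minimal exponents and their multiplicities to coincide, after which one divides through by $(1-t^a)^\mu$ and applies the inductive hypothesis. (Alternatively, factor each $1-t^{a_i}$ into cyclotomic polynomials and use unique factorization in $\fz[t]$ together with Möbius inversion over the divisibility poset.) Granting the identification of multisets, $(t_i)$ is a permutation of $(d_i-1)$, and hence $\deg\alpha=\sum_i t_i=\sum_i(d_i-1)=\sum_i d_i-n$.

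For the last assertion, let $g\in\Sigma(\mc{A})\subseteq S_n$. Since $g$ fixes $\mc{A}$ pointwise it fixes each generator $f_i$, hence $g(\mc{I})=\mc{I}$, so $g$ descends to a graded $\fk$-algebra automorphism $\bar g$ of $\mc{P}/\mc{I}$. Because $g$ acts simply by permuting the variables, a direct check shows that $g$ carries the set of monomial factors of $\alpha$ bijectively onto the set of monomial factors of $g(\alpha)=x_{g(1)}^{t_1}\cdots x_{g(n)}^{t_n}$ (the exponent bounds $0\le s_i\le t_i$ just get reindexed by $g$). An automorphism sends a basis to a basis, so the factors of $g(\alpha)$ form a basis of $\mc{P}/\mc{I}$; that is, $g(\alpha)$ generates a block basis. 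The whole proof is short, and the only step I expect to require genuine care is the passage from the polynomial identity to the equality of multisets, precisely because it must be performed over $\fz$: in characteristic $p$ the coefficient $-\mu$ could vanish, so arguing directly in $\fk[t]$ would break down.
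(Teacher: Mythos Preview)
The paper does not prove this lemma; it is simply quoted from \cite[Theorem 4.1]{CHSW} without argument, so there is no in-paper proof to compare against. Your argument is correct and is essentially the standard one: the Hilbert-series comparison gives the polynomial identity $\prod_i(1-t^{d_i})=\prod_i(1-t^{t_i+1})$ in $\fz[t]$, and your lowest-degree-term induction (which, as you rightly note, must be carried out over $\fz$ rather than $\fk$) forces the multisets $\{d_i\}$ and $\{t_i+1\}$ to coincide. The final assertion is likewise handled correctly: any $g\in\Sigma(\mc{A})$ fixes $\mc{I}$, hence induces an automorphism of $\mc{P}/\mc{I}$, and a permutation of the variables visibly sends the set of monomial factors of $\alpha$ bijectively to that of $g(\alpha)$.
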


\begin{lemma}\cite[Theorem 4.4]{CHSW}\label{critical2}
	Suppose $ \alpha=x_1^{t_1}\cdots x_n^{t_n} $ has trivial critical monomials and the sequence $ t_i $ is a permutation of $ d_i - 1. $ Then $ \alpha $ generates a block basis for $ P/\mc{I}. $
\end{lemma}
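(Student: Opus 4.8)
The plan is to reduce the claim to a spanning statement and then prove that spanning by a reduction modulo $\mc I$ whose shape is forced by the critical-monomial hypothesis. First I would note that it suffices to prove the monomial factors of $\alpha$ span $\mc P/\mc I$ over $\fk$: since $\mc P$ is a free module over the polynomial ring $\mc A=\fk[f_1,\dots,f_n]$, one has $\sum_e\dim_\fk(\mc P/\mc I)_e\,q^e=\prod_i(1+q+\cdots+q^{d_i-1})$, while the degree-$e$ monomial factors of $\alpha=x_1^{t_1}\cdots x_n^{t_n}$ are counted by the coefficient of $q^e$ in $\prod_i(1+q+\cdots+q^{t_i})$, and these two series coincide because $(t_i)$ is a permutation of $(d_i-1)$. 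Thus in each degree the number of monomial factors of $\alpha$ equals $\dim_\fk(\mc P/\mc I)_e$, so a spanning set of that cardinality is automatically a basis.

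Next I would unpack ``$\alpha$ has trivial critical monomials'': this says the critical monomials of $\alpha$ are single-variable powers $x_1^{c_1},\dots,x_n^{c_n}$, and every variable must occur among them, since otherwise a pure power of a missing variable would be a factor of every $\Sigma(\mc A)$-translate of $\alpha$, impossible in a finite-dimensional quotient. A short combinatorial check should then show that this forces $(t_i)$ to be constant on the orbits of $\Sigma(\mc A)$, so that $g(\alpha)=\alpha$ for every $g\in\Sigma(\mc A)$ and $c_j=t_j+1$: two orbit-mates carrying different exponents would already produce a genuinely two-variable critical monomial. Hence $N_\alpha:=(x_1^{t_1+1},\dots,x_n^{t_n+1})$ is exactly the ideal spanned by the monomials that are not factors of $\alpha$.

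The heart of the argument is a reduction modulo $\mc I$. Fixing a term order $\prec$ on $\mc P$, I would prove by induction on $\mu$ with respect to $\prec$ that every monomial $\mu$ is congruent mod $\mc I$ to a $\fk$-linear combination of monomial factors of $\alpha$. If $\mu$ is itself a factor of $\alpha$ there is nothing to do; otherwise $\mu$ is divisible by some $x_j^{t_j+1}$, and \emph{if} $\mc I$ contains a homogeneous element $r_j$ of degree $t_j+1$ with $\prec$-leading monomial $x_j^{t_j+1}$, then $\mu\equiv(\mu/x_j^{t_j+1})\bigl(x_j^{t_j+1}-r_j\bigr)\pmod{\mc I}$ expresses $\mu$ through strictly $\prec$-smaller monomials of the same degree, which the inductive hypothesis resolves. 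Summing over all monomials in each degree establishes the spanning, and with the count from the first step the proof is complete.

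The main obstacle is precisely the existence of the elements $r_1,\dots,r_n$ for one fixed term order $\prec$, that is, showing $\on{in}_\prec(\mc I)\supseteq N_\alpha$; by the degree count this then forces $\on{in}_\prec(\mc I)=N_\alpha$, so that $\{r_1,\dots,r_n\}$ is a Gr\"obner basis of $\mc I$ with pairwise coprime leading terms whose standard monomials are exactly the factors of $\alpha$. This step is where the two hypotheses must genuinely interact: the permutation condition guarantees $\mc I$ has homogeneous elements in exactly the degrees $t_j+1$, while triviality of the critical monomials --- through the $\Sigma(\mc A)$-invariance of $\alpha$ --- constrains the generators of $\mc I$ enough to make one adequate term order available. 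An alternative that avoids term orders entirely: $\mc P/\mc I$ is Gorenstein Artinian with one-dimensional socle concentrated in degree $N=\deg\alpha=\sum_i(d_i-1)$, and once one verifies $\bar\alpha\neq0$ (so that $\bar\alpha$ spans the socle), the multiplication pairing $(\mc P/\mc I)_e\times(\mc P/\mc I)_{N-e}\to(\mc P/\mc I)_N\cong\fk$ is perfect; restricting it to the degree-$e$ factors of $\alpha$ against their complementary degree-$(N-e)$ factors reduces the whole statement to the non-vanishing of $\bar\alpha$ together with the invertibility of a single finite pairing matrix.
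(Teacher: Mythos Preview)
The paper does not give its own proof of this lemma; it is quoted verbatim from \cite[Theorem~4.4]{CHSW} and used as a black box. So there is no in-paper argument to compare against, but your proposal has a genuine gap that is worth naming.

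You have misread the hypothesis. In \cite{CHSW}, and throughout the present paper, a critical monomial is called \emph{trivial} when it lies in $\mc I$, i.e.\ when it is congruent to $0$ in $\mc P/\mc I$. It does \emph{not} mean that the critical monomials are single-variable powers. The paper's own application makes this explicit: for $G(m,1,n)$ with $\alpha=x_1^{m-1}x_2^{2m-1}\cdots x_n^{mn-1}$ and $\Sigma(\mc A)=S_n$, the critical monomials are the genuinely multivariate monomials $(x_n\cdots x_r)^{mr}$, and the paper spends Lemma~4.1 proving each of them belongs to $\mc I$ before invoking the present lemma. Your reading would make that computation pointless.

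Once the meaning of ``trivial'' is corrected, the structure you build in paragraphs two and three collapses. There is no reason for the $t_i$ to be constant on $\Sigma(\mc A)$-orbits (again, the $G(m,1,n)$ example has $\Sigma(\mc A)=S_n$ and strictly increasing exponents), no reason for the ideal $N_\alpha=(x_1^{t_1+1},\dots,x_n^{t_n+1})$ to coincide with an initial ideal of $\mc I$, and hence no supply of the elements $r_j\in\mc I$ with leading monomial $x_j^{t_j+1}$ that your induction needs. Your dimension count in the first paragraph is correct and is indeed how one reduces to a spanning (or, equivalently, independence) statement; but the reduction step must use the actual hypothesis, namely that every critical monomial associated to $\alpha$ already lies in $\mc I$. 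The Gorenstein alternative you sketch at the end is closer in spirit to a workable argument, but the crux---showing $\bar\alpha\neq 0$ in $\mc P/\mc I$---is precisely where the triviality-in-$\mc I$ of the critical monomials has to enter, and you have not indicated how.
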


\section{invariants of $ \mc{P}$}
In this section, we will first recall the works by Dickson \cite{Di} and Kuhn-Mitchell \cite{KM} on invariants in $ P. $ And then tcretirierhe $ G_I $ invariants in $ \mc{P}$ will be investigated. 

\subsection{The invariants of Dickson and Kuhn-Mitchell}
For $ 1\leq k\leq n, $ define homogeneous polynomials $ V_k,  L_n,\ Q_{n,k} $ as follows:
\[ V_k= \prod_{\lambda_1,\cdots, \lambda_{{k-1}} \in\ff_q} (\lambda_1x_1+\cdots\lambda_{k-1}x_{k-1}+x_k), \]
\[ L_k=\prod_{i=1}^k V_i = \prod_{i=1}^k \prod_{\lambda_1,\cdots, \lambda_{{i-1}} \in\ff_q} (\lambda_1x_1+\cdots\lambda_{i-1}x_{i-1}+x_i), \]
\[ F_n(X)=\prod_{\lambda_1,\cdots, \lambda_{{i-1}} \in\ff_q} (X+\lambda_1x_1+\cdots\lambda_{n}x_{n})=X^{q^n}+\sum_{k=0}^{n-1} Q_{n,k}X^{q^k}. \]

According to \cite{Di}, the algebra of invariants over $ \GL_n(q) $ in $ \mc{P} $ are polynomial algebras. Moreover,
\begin{equation}\label{Dickson2}
\mc{P}^{\GL_n(q)} = \ff_q[Q_{n,0}, \cdots, Q_{n,n-1}].\quad\ \ 
\end{equation}
		
For $ 1\leq i\leq l,\ 1\leq j\leq n_i, $ define 
\begin{equation}\label{vij}
	v_{i,j}=F_{m_{i-1}}(x_{m_{i-1}+j}),
\end{equation}
\begin{equation}\label{qij}
	q_{i,j}=Q_{n_i,j}(v_{i,1},\cdots,v_{i,n_i} ).
\end{equation}
Then $ \deg(v_{i,j})=q^{m_{i-1}} $ and $ \deg (q_{i,j})=q^{m_i}-q^{m_i-j}. $ 

By the proof of \cite[Lemma 1]{MT},	
\begin{equation}\label{U_I on A}
\mc{P}^{U_I} = \ff_q[x_1,\cdots,x_{n_1}, v_{2,1} ,\cdots, v_{2,n_2},\cdots, v_{l,1},\cdots, v_{l,n_l} ].  
\end{equation}
Moreover, by \cite[Theorem 2.2]{KM} and \cite[Theorem 1.4]{He},
\begin{equation}\label{KM}
\mc{P}^{\GL_I} = \ff_q[q_{i,j}\mid 1\leq i\leq l, 1\leq j\leq n_i],
\end{equation}

\subsection{The invariants of $ G_I $}
\begin{lemma}\cite[Proposition 3.2]{Ou}
	For $ 1\leq i\leq l,  $
	assume that $ \ff_q[x_1,\cdots,x_{n_i}]^{G_{i}} =\ff_q[e_{i,1},\cdots, e_{i, n_i}] $ is a polynomial algebra. For $ 1\leq j\leq n_i, $ define
	$  u_{i,j}=e_{i,j}(v_{i,1},\cdots,v_{i,n_i}).  $ The subalgebra $ \mc{P}^{G_I} $ of $ G_I $-invariants in $ \mc{P}$ is a polynomial ring on the generators $ u_{i,j} $  with  $ 1\leq i\leq l, 1\leq j\leq n_i. $ Namely,
	$$  P^{G} = \ff_q[u_{i,j}\mid 1\leq i\leq l, 1\leq j\leq n_i]. $$
\end{lemma}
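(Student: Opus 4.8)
The plan is to reduce the computation to one block at a time by exploiting the semidirect product $G_I = L_I \ltimes U_I$. Since $U_I$ is normal in $G_I$ and $G_I = \langle L_I, U_I\rangle$, a polynomial is $G_I$-invariant exactly when it is both $L_I$- and $U_I$-invariant; as $L_I$ normalizes $U_I$ it acts on $\mc{P}^{U_I}$, so $\mc{P}^{G_I} = (\mc{P}^{U_I})^{L_I}$. By \eqref{U_I on A}, together with the convention $m_0 = 0$ which gives $v_{1,j} = F_0(x_j) = x_j$ and so puts the first block on the same footing as the rest, we may write $\mc{P}^{U_I} = \ff_q[v_{i,j} \mid 1\le i\le l,\ 1\le j\le n_i]$, a polynomial ring in these $n$ generators. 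Everything then reduces to describing the $L_I$-action on the generators $v_{i,j}$ and passing to invariants.

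The key step is the claim that, writing $L_I = \prod_{i=1}^{l} G_i$ with $G_i$ embedded as the $i$-th diagonal block, $G_i$ fixes $v_{i',j}$ for every $i'\ne i$ and acts on $v_{i,1},\dots,v_{i,n_i}$ by precisely the linear representation through which $G_i\subseteq \GL_{n_i}(q)$ acts on $x_1,\dots,x_{n_i}$. This uses two features of the polynomials $F_m$. First, $F_m(X) = X^{q^m} + \sum_{k<m} Q_{m,k} X^{q^k}$ is $\ff_q$-linearized in $X$, i.e.\ $F_m(aX+bY) = aF_m(X) + bF_m(Y)$ for $a,b\in\ff_q$, with coefficients $Q_{m,k}\in \ff_q[x_1,\dots,x_m]$. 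Second, $F_m(X) = \prod_{v}(X+v)$ as $v$ runs over the $\ff_q$-span of $x_1,\dots,x_m$, and each $g\in G_i$ maps this span onto itself whenever $m = m_{i'-1}$: for $i'>i$ the span contains the whole $i$-th block while $g$ fixes everything outside it, and for $i'\le i$ the element $g$ fixes $x_1,\dots,x_{m_{i-1}}$, hence $x_1,\dots,x_{m_{i'-1}}$, pointwise. Thus $g\cdot F_{m_{i'-1}}(X) = F_{m_{i'-1}}(X)$ as polynomials in $X$. For $i'\ne i$ the argument $x_{m_{i'-1}+j}$ is untouched by $g$, so $g\cdot v_{i',j} = v_{i',j}$; for $i'=i$ the linearization identity yields $g\cdot v_{i,j} = F_{m_{i-1}}\!\bigl(\sum_k a_{kj}\,x_{m_{i-1}+k}\bigr) = \sum_k a_{kj}\,v_{i,k}$ with $(a_{kj})$ the matrix of $g$ on the $x$'s, proving the claim.

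It follows that the $\ff_q$-algebra isomorphism $\mc{P}^{U_I} \iso \bigotimes_{i=1}^{l} \ff_q[y^{(i)}_1,\dots,y^{(i)}_{n_i}]$ sending $v_{i,j}\mapsto y^{(i)}_j$ is equivariant for $L_I = \prod_i G_i$ acting on the target factor by factor, $G_i$ acting on the $i$-th factor through its natural representation and trivially on the others. Taking invariants, the source becomes $\mc{P}^{G_I}$ and the target becomes $\bigotimes_{i=1}^{l} \ff_q[y^{(i)}_1,\dots,y^{(i)}_{n_i}]^{G_i} = \bigotimes_{i=1}^{l} \ff_q[e_{i,1},\dots,e_{i,n_i}]$ by hypothesis --- here one uses that the invariants of a direct product acting factorwise on a tensor product of $\ff_q$-algebras are the tensor product of the invariants. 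Pulling back along the isomorphism carries $e_{i,j}$ to $u_{i,j} = e_{i,j}(v_{i,1},\dots,v_{i,n_i})$, so $\mc{P}^{G_I} = \ff_q[u_{i,j}\mid 1\le i\le l,\ 1\le j\le n_i]$; and since a tensor product over the field $\ff_q$ of polynomial rings is again polynomial, the $u_{i,j}$ are algebraically independent, as claimed.

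The step I expect to be the real obstacle is the fixation claim for $i'>i$: there the coefficients $Q_{m_{i'-1},k}$ of $F_{m_{i'-1}}$ genuinely involve the block-$i$ variables that $G_i$ moves, so one cannot argue coefficient by coefficient. The way through is the ``span-stability'' observation above --- $g\in G_i$ is an $\ff_q$-linear automorphism of $\langle x_1,\dots,x_{m_{i'-1}}\rangle$, hence merely permutes the linear forms $v$ and with them the factors $X+v$ of $F_{m_{i'-1}}(X)$ --- after which only routine bookkeeping with indices and the identity $F_m(aX+bY) = aF_m(X) + bF_m(Y)$ remain.
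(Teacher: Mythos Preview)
Your argument is correct. The paper does not actually prove this lemma here; it is quoted from the companion paper \cite[Proposition 3.2]{Ou}, so there is no in-text proof to compare against. Your route---pass to $\mc{P}^{G_I}=(\mc{P}^{U_I})^{L_I}$ via the semidirect product $G_I=L_I\ltimes U_I$, identify $\mc{P}^{U_I}$ with the polynomial ring $\ff_q[v_{i,j}]$ from \eqref{U_I on A}, then use the $\ff_q$-linearity of $F_{m_{i-1}}$ together with the span-stability of $\langle x_1,\dots,x_{m_{i'-1}}\rangle$ under each $G_i$ to see that $L_I=\prod_i G_i$ acts on the $v_{i,j}$ blockwise through the natural representation---is the expected one and matches the strategy of \cite{Ou}. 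The point you flag as the delicate step (fixation of $v_{i',j}$ for $i'>i$, where the coefficients $Q_{m_{i'-1},k}$ genuinely move) is handled exactly as it must be, by permuting the linear factors of $F_{m_{i'-1}}(X)=\prod_v(X+v)$ rather than arguing coefficientwise.
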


\section{block basis for the coinvariants of $ G(m,a,n) $}
Recall that $ G(m,a,n)\iso S_{n}\ltimes A(m,a,n), $ which is called imprimitive reflection group, where
\[ A(m,a,n)=\{ \on{diag}(w_1,\cdots,w_{n})\mid w_j^m=(w_1\cdots w_{n})^{m/a}=1 \}. \]

Denote $ G=G(m,a,n), $ it is well-known that
$ \mc{P}^G=\fk[{e_1},\cdots,{e_n}], $ where $$ e_i=
\left\{ \begin{matrix}
\sum_{1\leq j_1<\cdots<j_i\leq n } x_{j_1}^m\cdots x_{j_i}^m & i\neq n\\
(x_1\cdots x_n)^{m/a} & i=n
\end{matrix} 
\right.. $$ 
Recall that $ \mc{I} $ is the ideal of $ \mc{P}$ generated by $ e_1,\cdots,e_n, $ and $ d_i:=\deg (e_i)= \left\{ \begin{matrix}
mi & i\neq n\\
mn/a & i=n
\end{matrix} 
\right.. $ Clearly $ \Sigma(\mc{P}^G)=S_n. $ 
\subsection{$ a=1 $ case} In this case,
let $ \alpha=x_1^{m-1}x_2^{2m-1}\cdots x_n^{mn-1}. $ The critical monomials associated to $ \alpha $ are $ (x_n\cdots x_i)^{mi}. $ For each $ 1\leq i\leq n $ and $ 0\leq k\leq i, $ let $ e_{k,i} =\sum_{1\leq j_1<\cdots<j_k\leq i } x_{j_1}^m\cdots x_{j_k}^m $ if $ k\neq 0 $ and $ e_{0,i}=0. $ Then $ e_{k,n}=e_k  $ for all $ k=1,\cdots,n. $

\begin{lemma}
	For all $ 1\leq i\leq n $ and $ 1\leq r\leq n+1-i, $ one have\[ (x_n\cdots  x_{n+1-i})^{mr}e_{n+1-i-r,n-i}\in \mc{I}. \]
\end{lemma}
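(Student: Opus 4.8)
The goal is to show $(x_n\cdots x_{n+1-i})^{mr}e_{n+1-i-r,n-i}\in\mathcal{I}$ for $1\leq i\leq n$ and $1\leq r\leq n+1-i$. The plan is to induct on $i$ (equivalently, on the number of "leading" variables $x_n,\dots,x_{n+1-i}$ that appear in the monomial prefactor). The base case $i=1$ asserts $x_n^{mr}e_{n-r,n-1}\in\mathcal{I}$ for $1\leq r\leq n$, which I would extract from the relation $e_r=e_{r,n}$ together with the classical recursion $e_{k,n}=e_{k,n-1}+x_n^m e_{k-1,n-1}$ for the elementary-symmetric-type polynomials in the $x_j^m$: iterating this recursion in $x_n^m$ expresses $e_r$ as $\sum_{s=0}^{r} x_n^{ms}e_{r-s,n-1}$ — wait, more precisely $e_r = \sum_{s\geq 0} x_n^{ms} e_{r-s,n-1}$ with only $s=0,1$ terms — so one obtains $x_n^{m}e_{r-1,n-1}\equiv e_r - e_{r,n-1}$, and more generally $x_n^{mr}e_{k,n-1}$ can be pushed into $\mathcal{I}$ by repeatedly substituting. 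I would make this precise by the single identity $e_{k,n}-e_{k,n-1}=x_n^m e_{k-1,n-1}$, rearranged to $x_n^m e_{k-1,n-1}\equiv -e_{k,n-1}\pmod{\mathcal{I}}$ (since $e_{k,n}=e_k\in\mathcal{I}$), then iterate to lower $k$.

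**Key steps.**

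First I would record the recursion lemma: for $0\leq k\leq j$, $e_{k,j}=e_{k,j-1}+x_j^m e_{k-1,j-1}$, with the conventions $e_{0,j}=0$ as in the text (note this differs from the usual convention $e_{0,j}=1$; I should double-check which is intended, since with $e_{0,j}=0$ the recursion needs $e_{k,j}=e_{k,j-1}+x_j^m e_{k-1,j-1}$ to still hold for the relevant range — it does for $k\geq 2$, and the $k=1$ case reads $e_{1,j}=e_{1,j-1}+x_j^m\cdot 0$, which is wrong, so in fact the convention must effectively be $e_{0,j}=1$ when it multiplies $x_j^m$; I would state the recursion carefully or adopt $e_{0,j}=1$ internally). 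Second, the inductive step: assuming the statement for $i-1$ (all valid $r$), I would write $(x_n\cdots x_{n+1-i})^{mr}e_{n+1-i-r,n-i}$ and use the recursion on the index $n-i+1$ to relate $e_{k,n-i}$ back to $e_{k,n-i+1}$ and $e_{k+1,n-i+1}$, absorbing the extra factor $x_{n+1-i}^{mr}$; combined with the induction hypothesis applied to the prefix $x_n\cdots x_{n+2-i}$ one telescopes down. The natural way to organize this is to prove the sharper auxiliary claim that $(x_n\cdots x_{n+1-i})^{ms}e_{k,n-i}\in\mathcal{I}$ whenever $k+s\geq n+1-i$, from which the stated inequality $r\leq n+1-i$ and index $n+1-i-r$ is the boundary case $s=r$, $k=n+1-i-r$ giving $k+s=n+1-i$.

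**Main obstacle.**

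The routine part is the symmetric-function bookkeeping; the real care is in the induction organizing which prefactor monomial gets peeled off at each stage and verifying that the degree/index inequalities are preserved so that the induction hypothesis genuinely applies (in particular that when I rewrite $e_{k,n-i}$ via the recursion the new terms $e_{k,n-i+1}$, $e_{k+1,n-i+1}$ paired with $(x_n\cdots x_{n+2-i})^{mr}$ — note one fewer variable in the product, so I must also track a factor of $x_{n+1-i}^{mr}$ sitting outside — still satisfy $k'+r\geq n+1-(i-1)$). A secondary subtlety is confirming that $e_r\in\mathcal{I}$ is the only input from the defining ideal needed (for $r<n$), with the top generator $e_n=(x_1\cdots x_n)^{m/a}$ not entering — which is why this lemma is stated in the $a=1$ subsection, where $e_n=(x_1\cdots x_n)^m=e_{n,n}$ fits the uniform pattern and the recursion applies all the way up. I expect the proof to be short once the sharper auxiliary claim is isolated and proved by a clean double induction on $i$ and on $k$.
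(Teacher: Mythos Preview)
Your approach---a double induction on $i$ and then on $r$ (equivalently on $k$), driven by the recursion $e_{k,j}=e_{k,j-1}+x_j^m e_{k-1,j-1}$---is exactly what the paper does; the paper's inductive step rewrites $x_{n+1-i}^m\,e_{k,n-i}=e_{k+1,\,n-i+1}-e_{k+1,\,n-i}$ (so the two resulting terms are $e_{k+1,n-i+1}$ and $e_{k+1,n-i}$, not $e_{k,n-i+1}$ and $e_{k+1,n-i+1}$ as you wrote), multiplies through, and then applies the primary hypothesis to the first term and the secondary hypothesis to the second. Your suspicion that the paper's convention $e_{0,i}=0$ should really be $e_{0,i}=1$ is correct---the paper's own corollary immediately afterward reads $(x_n\cdots x_r)^{mr}e_{0,n-i}=(x_n\cdots x_r)^{mr}$, which only makes sense with $e_{0,i}=1$.
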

\begin{proof}
	The proof is by induction on $ i. $
	\begin{enumerate}
		\item When $ i=1, $ one need to verify that $ x_n^{mr}e_{n-r,n-1}\in \mc{I}. $ We will use induction on $ r $ to prove it.
		\begin{enumerate}
			\item When $ r=1, $ $ x_n^me_{n-1,n-1}=e_{n,n}\in \mc{I}. $
			\item Suppose $ r>1. $ Note that
			\begin{equation}\label{5.1}
			e_{k,i}=e_{k,i-1}+x_i^me_{k-1.i-1}.
			\end{equation}
		\end{enumerate}
		
		Therefore, $ x_n^{mr}e_{n-r,n-1}+ x_n^{m(r-1)}e_{n-r+1,n-1}=x_n^{m({r-1})} e_{n-r+1}\in \mc{I}. $ By induction assumption on $ r,$  $ x_n^{m(r-1)}e_{n-r+1,n-1}\in \mc{I}. $ Thus $ x_n^{mr}e_{n-r,n-1}\in \mc{I}. $
		\item Suppose $ i>1. $ Again we will use induction on $ r $ to prove $ (x_n\cdots x_{n+1-i})^{mr}e_{n+1-i-r,n-i}\in \mc{I}. $
		\begin{enumerate}
			\item When $ r=1, $ $ (x_n\cdots x_{n+1-i})^{m}e_{n-i,n-i}=e_{n,n}\in \mc{I}. $
			\item Suppose $ r>1. $ By \ref{5.1}
			$$ (x_n\cdots x_{n+1-i})^{mr}e_{n+1-i-r,n-i} + (x_n\cdots x_{n+2-i})^{mr}x_{n+1-i}^{m({r-1})}e_{n+2-i-r,n-i}$$
			$$=(x_n\cdots x_{n+2-i})^{mr}x_{n+1-i}^{m({r-1})}e_{n+2-i-r,n-i+1}. $$ 
		\end{enumerate}
		
		Using the primary induction hypothesis, $ (x_n\cdots x_{n+2-i})^{mr}x_{n+1-i}^{m({r-1})}e_{n+2-i-r,n-i+1}\in \mc{I}. $
		Using the secondary induction hypothesis, $ (x_n\cdots x_{n+2-i})^{mr}x_{n+1-i}^{m({r-1})}e_{n+2-i-r,n-i}\in \mc{I}. $ Thus, $ (x_n\cdots x_{n+1-i})^{mr}e_{n+1-i-r,n-i}\in \mc{I}. $
	\end{enumerate}
\end{proof}

Set $ r=n+1-i, $ one have $ (x_n\cdots  x_{r})^{mr}e_{0,n-i}=(x_n\cdots  x_{r})^{mr}\in \mc{I}. $
\begin{corollary}
	The critical monomial $ (x_n\cdots  x_{r})^{mr}, r=1,\cdots,n, $ are trivial.
\end{corollary}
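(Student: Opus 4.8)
The plan is to obtain the Corollary as the boundary case of the preceding Lemma. Recall (cf.\ \cite{CHSW}) that a critical monomial $w$ associated to $\alpha$ is called \emph{trivial} precisely when $w\in\mc{I}$, i.e.\ $w\equiv 0$; so all that must be shown is that $(x_n\cdots x_r)^{mr}\in\mc{I}$ for every $r=1,\dots,n$. I would get each such membership by specializing the parameter $r$ in the Lemma to the largest value its hypothesis permits.

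Concretely: fix $i\in\{1,\dots,n\}$ and take $r=n+1-i$ in the Lemma (this is admissible, being the top of the range $1\le r\le n+1-i$). The Lemma then gives
\[ (x_n\cdots x_{n+1-i})^{mr}\, e_{n+1-i-r,\, n-i}\in\mc{I}. \]
Now $n+1-i-r=0$, so the second factor is $e_{0,\, n-i}$, which equals $1$; and $x_n\cdots x_{n+1-i}$ is just $x_n\cdots x_r$ with exponent $mr=m(n+1-i)$. Hence the displayed relation reads $(x_n\cdots x_r)^{mr}\in\mc{I}$. As $i$ runs over $1,\dots,n$ the value $r=n+1-i$ runs over $n,\dots,1$, so this covers all the monomials $(x_n\cdots x_r)^{mr}$, $r=1,\dots,n$, and each of them is trivial.

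There is essentially no obstacle in this argument: all of the content sits in the Lemma, whose proof is the double induction (first on $i$, then on $r$) that strips off one copy of $x_{n+1-i}^m$ at a time via the Pascal-type identity (\ref{5.1}), and the Corollary is merely that induction's endpoint, phrased via critical monomials. One point worth flagging is the bookkeeping convention: the computation above (and the line preceding the Corollary) requires $e_{0,j}=1$, which is the only value consistent with (\ref{5.1}) for $k=1$, so the printed ``$e_{0,i}=0$'' should read ``$e_{0,i}=1$''. With the Corollary in hand, the exponents $t_i=mi-1$ of $\alpha$ already form a permutation of $d_i-1$, so Lemma \ref{critical2} applies and shows that $\alpha=x_1^{m-1}x_2^{2m-1}\cdots x_n^{mn-1}$ generates a block basis for the coinvariants of $G(m,1,n)$.
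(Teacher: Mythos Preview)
Your argument is correct and is exactly the paper's: the Corollary is obtained by setting $r=n+1-i$ in the Lemma, so that the accompanying factor becomes $e_{0,n-i}$ and the conclusion reads $(x_n\cdots x_r)^{mr}\in\mc{I}$. Your observation that the printed convention ``$e_{0,i}=0$'' must be a typo for ``$e_{0,i}=1$'' is also correct---the paper itself implicitly uses $e_{0,n-i}=1$ in the displayed line immediately preceding the Corollary, and the recursion (\ref{5.1}) with $k=1$ forces this value.
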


By Lemma \ref{critical2}, one have
\begin{corollary}\label{1}
	The monomial $ \alpha=x_1^{m-1}x_2^{2m-1}\cdots x_n^{mn-1} $ generates a block basis for the coinvariants of $ G(m,1,n). $
\end{corollary}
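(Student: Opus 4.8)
The plan is to deduce Corollary~\ref{1} directly from Lemma~\ref{critical2} by checking its two hypotheses for the specific monomial $\alpha=x_1^{m-1}x_2^{2m-1}\cdots x_n^{mn-1}$ and the homogeneous system of parameters $\{e_1,\dots,e_n\}$ generating $\mc{I}$.

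\textbf{First step: the degree condition.} Lemma~\ref{critical2} requires that the exponent sequence of $\alpha$ be a permutation of the sequence $d_i-1$. Here the exponents of $\alpha$ are $m-1,2m-1,\dots,mn-1$. Since $a=1$, we have $d_i=mi$ for $i=1,\dots,n$ (including $i=n$, where $d_n=mn/a=mn$), so $d_i-1=mi-1$, and the exponent sequence of $\alpha$ is literally equal (not merely up to permutation) to $(d_i-1)_{i=1}^n$. This is immediate from the formulas for $d_i$ recorded just before the subsection.

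\textbf{Second step: trivial critical monomials.} Since $\Sigma(\mc{P}^G)=S_n$ and the exponents $t_i=mi-1$ of $\alpha$ are increasing, the discussion before Lemma~\ref{critical} tells us that the critical monomials associated to $\alpha$ are exactly the $(x_n\cdots x_{n+1-i})^{mi}$ for $i=1,\dots,n$ — equivalently, writing $r=n+1-i$, they are $(x_n\cdots x_r)^{mr}$ for $r=1,\dots,n$. (One must be slightly careful here: the running text writes the critical monomials as $(x_1\cdots x_i)^{t_i+1}$ for a \emph{decreasing} sequence $t_i$; because our $t_i$ is increasing, the roles of the variables are reversed, which is precisely the form $(x_n\cdots x_{n+1-i})^{mi}$ asserted in the paper and handled by the preceding Lemma.) The key input is the Lemma just proved: taking $r=n+1-i$ in its statement gives $(x_n\cdots x_r)^{mr}e_{0,n-i}=(x_n\cdots x_r)^{mr}\in\mc{I}$ since $e_{0,n-i}=1$ by convention — wait, the paper sets $e_{0,i}=0$, so more precisely the $r=n+1-i$ instance of the Lemma's conclusion $(x_n\cdots x_{n+1-i})^{mr}e_{n+1-i-r,n-i}\in\mc{I}$ becomes $(x_n\cdots x_r)^{mr}e_{0,n-i}\in\mc{I}$, and one uses the separate observation (the Corollary stating triviality) that $e_{0,n-i}$ should be read as the empty product $1$ in that boundary case, so the critical monomial itself lies in $\mc{I}$. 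Thus every critical monomial of $\alpha$ is $\equiv 0$, i.e.\ is trivial.

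\textbf{Conclusion.} Having verified both hypotheses, Lemma~\ref{critical2} applies and yields that $\alpha$ generates a block basis for $\mc{P}/\mc{I}=\mc{P}_{G(m,1,n)}$, which is the assertion of Corollary~\ref{1}. I expect the only genuine subtlety to be the bookkeeping in the second step: matching the "reversed" indexing of the critical monomials (increasing vs.\ decreasing exponent sequence) with the form in which the preceding Lemma and Corollary deliver the membership $(x_n\cdots x_r)^{mr}\in\mc{I}$, and correctly interpreting the boundary convention for $e_{0,\bullet}$ so that the Lemma's output is genuinely the critical monomial rather than a multiple of it. Everything else is a direct citation of Lemma~\ref{critical2} together with the degree formula for the $e_i$.
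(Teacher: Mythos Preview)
Your proposal is correct and follows exactly the paper's approach: verify the two hypotheses of Lemma~\ref{critical2} (the exponent sequence matches $d_i-1$, and the critical monomials $(x_n\cdots x_r)^{mr}$ are trivial by the preceding Lemma/Corollary), then invoke that lemma. Your hesitation over $e_{0,\bullet}$ is well-founded---the paper's definition $e_{0,i}=0$ is a typo (it should be the empty product $1$, as the paper itself uses immediately afterward), and you correctly resolve it.
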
 

By Lemma \ref{critical}, it's convenient to deal with the block basis given by $ x_1^{mn-1}x_2^{m(n-1)-1}\cdots x_n^{m-1} $ to describe the product in $ \mc{P}_G. $ In fact, we need to write $ x_{n+1-r}^{mr} $ in terms of our block basis.

Let $ s_{k,i}=\sum_{j_1+\cdots+j_i=k} x_{j_1}^{mj_1}\cdots x_{j_i}^{mj_i}. $ Observe that  $s_{k,i}=x_i^ms_{k-1,i}+s_{k,i-1}. $ The proof of \cite[Theorem 8.4]{CHSW} still works. Namely, if $ k+i\geq n+1, $ $  s_{k,n+1-i}\in \mc{I}.  $ As Corollary, one have
\begin{proposition}\label{5.4}
	Keep notations as above,
	$$ x_{n+1-r}^{mr}\equiv-\sum_{\stackrel{j_1+\cdots+j_{n+1-r}= r}{j_{n+1-r}\neq r}} x_{j_1}^{mj_1}\cdots x_{j_{n+1-r}}^{mj_{n+1-r}}. $$
\end{proposition}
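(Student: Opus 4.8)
\smallskip
\noindent\textbf{Proof proposal.} The plan is to read Proposition~\ref{5.4} as a bookkeeping rearrangement of the single containment $s_{r,n+1-r}\in\mc{I}$, so that the only substantive input is the transplant of \cite[Theorem 8.4]{CHSW} recorded just above. First I would make the identification of $s_{k,i}$ explicit: the recursion $s_{k,i}=x_i^m s_{k-1,i}+s_{k,i-1}$, together with $s_{0,i}=1$ and $s_{k,0}=0$ for $k>0$, shows that $s_{k,i}$ is the complete homogeneous symmetric polynomial of degree $k$ in $x_1^m,\dots,x_i^m$; equivalently $s_{k,i}=\sum_{j_1+\cdots+j_i=k}x_1^{mj_1}\cdots x_i^{mj_i}$ with all $j_t\ge 0$. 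Taking $i=n+1-r$ and $k=r$ produces exactly the sum on the right-hand side of the proposition.

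Then I would proceed in three steps. (i) Apply the consequence of \cite[Theorem 8.4]{CHSW} recalled above to the index pair $(k,i)=(r,n+1-r)$: here $k+i=r+(n+1-r)=n+1$, the extreme instance of the inequality $k+i\ge n+1$, so $s_{r,n+1-r}\equiv 0$. (ii) In the expansion $s_{r,n+1-r}=\sum_{j_1+\cdots+j_{n+1-r}=r}x_1^{mj_1}\cdots x_{n+1-r}^{mj_{n+1-r}}$ single out the summand indexed by $(j_1,\dots,j_{n+1-r})=(0,\dots,0,r)$, which is precisely $x_{n+1-r}^{mr}$; transposing the remaining summands and using (i) gives the asserted congruence. (iii) To justify the phrase ``in terms of our block basis'', I would verify that every surviving summand is a monomial factor of the block generator $x_1^{mn-1}x_2^{m(n-1)-1}\cdots x_n^{m-1}$: for $1\le t\le n+1-r$ the exponent of $x_t$ is $mj_t$ with $0\le j_t\le r$, and if $j_t\ge n+1-t$ then $n+1-t\le j_t\le r$ forces $t=n+1-r$ and $j_t=r$, i.e.\ the excluded term; hence $j_t\le n-t$ and $mj_t\le m(n-t)\le m(n+1-t)-1$, so the monomial indeed divides the block generator. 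That this generator --- the reversal of the monomial $\alpha$ from Corollary~\ref{1} --- is again a block-basis generator follows from Corollary~\ref{1} together with the last assertion of Lemma~\ref{critical}, and this is what makes the reformulation legitimate.

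I do not expect a genuine obstacle: the one non-formal ingredient --- that $s_{k,i}$ lies in $\mc{I}$ once $k+i\ge n+1$ --- is imported wholesale from \cite{CHSW} and was already stated, and the rest is bookkeeping. The points that deserve a second look are: (a) that the summand singled out in step (ii) is exactly $x_{n+1-r}^{mr}$ and not some adjacent monomial; (b) the exponent estimate in step (iii), which is what keeps the surviving monomials inside the block basis and uses the hypothesis $j_{n+1-r}\ne r$ in an essential way; and (c) that we invoke the imported vanishing statement precisely at its boundary value $k+i=n+1$, so the bound used there must be the non-strict one.
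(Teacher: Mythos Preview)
Your proposal is correct and follows the same approach as the paper: the paper's argument is simply the line ``The proof of \cite[Theorem 8.4]{CHSW} still works. Namely, if $k+i\geq n+1$, $s_{k,n+1-i}\in\mc{I}$. As Corollary, one have [Proposition~\ref{5.4}]'', so your steps (i)--(ii) are exactly what is intended. Your step (iii) supplies the verification behind the paper's remark that ``the summands on the right hand side form a subset of our block basis'', which the paper asserts without proof; your exponent estimate is the right one.
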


Note that the summands on the right hand side form a subset of our block basis. This proposition provides a recursive algorithm for computing products in $ \mc{P}_G. $

\subsection{$ a\neq 1 $ case} Suppose $ a\neq 1 $ now. By Lemma \ref{critical}, all candidates of block generators for coinvariants of $ G(m,a,n) $ are $ \alpha_{\sigma}= x_{\sigma(1)}^{m-1}x_{\sigma(2)}^{2m-1}\cdots x_{\sigma(n)}^{mn/a-1} $ for $ \sigma\in S_n. $

\begin{lemma}\label{2}
	Keep notations as above. $ \alpha_{\sigma}\in \mc{I}. $ In particular, there is no block basis for the coinvariants of $ G(m,a,n). $
\end{lemma}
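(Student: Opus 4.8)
The plan is to prove the sharper statement that the invariant $e_n=(x_1\cdots x_n)^{m/a}$ divides every candidate monomial $\alpha_\sigma$; this puts $\alpha_\sigma\in(e_n)\subseteq\mc{I}$, and then the non-existence of a block basis is immediate from Lemma \ref{critical}.

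First I would check the numerical inequality that each exponent occurring in $\alpha_\sigma$ is at least $m/a$. By Lemma \ref{critical} the exponent multiset of any candidate is $\{d_1-1,\dots,d_n-1\}=\{m-1,\,2m-1,\,\dots,\,(n-1)m-1,\,mn/a-1\}$. Since $a\mid m$ and $a\neq 1$, write $m=ab$ with $b\geq 1$ and $a\geq 2$, and recall $n\geq 2$. Then for $1\leq i\leq n-1$ we have $im-1\geq m-1=ab-1\geq b=m/a$, because $b(a-1)\geq 1$; and $mn/a-1=bn-1\geq b=m/a$, because $b(n-1)\geq 1$. Hence $d_i-1\geq m/a$ for every $i$. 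Consequently, since $\{\sigma(1),\dots,\sigma(n)\}=\{1,\dots,n\}$,
\[
\alpha_\sigma=\prod_{i=1}^{n}x_{\sigma(i)}^{\,d_i-1}=\Bigl(\prod_{i=1}^{n}x_{\sigma(i)}^{\,m/a}\Bigr)\cdot\prod_{i=1}^{n}x_{\sigma(i)}^{\,d_i-1-m/a}=e_n\cdot(\text{a monomial}),
\]
so $\alpha_\sigma\in(e_n)\subseteq\mc{I}$, which is the first claim.

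It then remains to deduce the absence of a block basis. Any monomial that generates a block basis must occur among its own monomial factors, hence must project to a nonzero element of $\mc{P}/\mc{I}$; but $\alpha_\sigma\equiv 0$ by the above. By Lemma \ref{critical} the only monomials that could possibly generate a block basis for $\mc{P}_G$ are the $\alpha_\sigma$, so none exists. There is no real obstacle in this argument: the only things to watch are the boundary behaviour of the inequalities at $m=a$ and $n=a$ (covered by the weak inequalities above) and the fact, used implicitly, that $n\geq 2$ once $a\neq 1$. It is instructive to contrast with the case $a=1$ of Corollary \ref{1}: there $m/a=m$ while the smallest exponent in $\alpha$ is $m-1<m$, so $e_n=(x_1\cdots x_n)^m$ fails to divide $\alpha$ and no such obstruction occurs. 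One could alternatively note that each $e_i$ is symmetric, so $\mc{I}$ is $S_n$-stable and $g(\alpha_\sigma)=\alpha_{g\sigma}$, reducing everything to $\alpha_{\mathrm{id}}$; but the displayed factorization already treats all $\sigma$ uniformly.
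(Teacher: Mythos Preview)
Your argument is correct and is essentially the paper's own proof: both show that every exponent in $\alpha_\sigma$ is at least $m/a$ (via the inequalities $m-1\geq m/a$ and $mn/a-1\geq m/a$ when $a\geq 2$ and $n\geq 2$), hence $e_n=(x_1\cdots x_n)^{m/a}$ divides $\alpha_\sigma$, giving $\alpha_\sigma\in\mc{I}$. Your write-up is a bit more explicit about the numerics and about invoking Lemma~\ref{critical} to exhaust the candidates, but the idea is identical.
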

\begin{proof}
	Note that $ mn/a-1\geq m/a $ unless $ n=1 $ which we omit. Since $ a\neq 1 $ by hypothesis, hence $ (m-1)(a-1)\geq 1 $ and $ m-1\geq m/a. $ Therefore, there exist $ f_n\in \mc{P}$ such that $$ \alpha_{\sigma}=(x_1\cdots x_n)^{m/a}f_n=e_nf_n\in \mc{I}. $$
\end{proof}

\subsection{general case}
Combining Corollary \ref{1}, Proposition \ref{5.4} and Lemma \ref{2}, we have one of our main results.
\begin{theorem}\label{5.6}
	The coinvariants of $ G(m,a,n) $ has block basis if and only if $ a=1. $ 
	Moreover, if $ a=1, $ the monomial $ x_1^{mn-1}x_2^{m(n-1)-1}\cdots x_n^{m-1} $ generates a block basis with products $$ x_{n+1-r}^{mr}=-\sum_{\stackrel{j_1+\cdots+j_n= r}{j_n\neq r}} x_{j_1}^{mj_1}\cdots x_{j_{n+1-r}}^{mj_{n+1-r}}. $$
\end{theorem}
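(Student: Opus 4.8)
The plan is to assemble Theorem \ref{5.6} directly from the three pieces already proved in this section, treating the two directions of the ``if and only if'' separately. For the ``if'' direction, suppose $a=1$. Corollary \ref{1} already says that $\alpha=x_1^{m-1}x_2^{2m-1}\cdots x_n^{mn-1}$ generates a block basis for $\mc{P}_{G(m,1,n)}$. Since $\Sigma(\mc{P}^G)=S_n$ fixes the invariant subalgebra pointwise, the last sentence of Lemma \ref{critical} lets me apply any $g\in S_n$ to $\alpha$ and still have a block generator; taking $g$ to be the longest element (the order-reversing permutation) converts $\alpha$ into $x_1^{mn-1}x_2^{m(n-1)-1}\cdots x_n^{m-1}$, which is therefore also a block generator. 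The product formula displayed in the theorem is then exactly Proposition \ref{5.4} rewritten for this reversed generator, so I would simply cite it (possibly after a line re-indexing the exponents $mj_1,\dots,mj_{n+1-r}$ to match the stated form).

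For the ``only if'' direction, suppose the coinvariants admit a block basis, generated by some monomial $\beta$. By Lemma \ref{critical} the exponent sequence of any block generator must be a permutation of $(d_1-1,\dots,d_n-1)=(m-1,2m-1,\dots,(n-1)m-1,\,mn/a-1)$, so $\beta=\alpha_\sigma$ for some $\sigma\in S_n$ in the notation of the $a\neq1$ subsection. But Lemma \ref{2} shows that if $a\neq 1$ then every such $\alpha_\sigma$ lies in $\mc{I}$, hence projects to zero in $\mc{P}/\mc{I}$ and cannot be an element — let alone a generator — of a basis. This contradiction forces $a=1$, completing the equivalence. I should also dispose of the degenerate case $n=1$ explicitly: there $G(m,a,1)$ forces $a\mid m$ and the statement is checked by hand, or one simply notes $G(m,a,1)=G(m,1,1)$ up to the relevant data so $a=1$ effectively.

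The argument is essentially bookkeeping: the three lemmas do all the real work, and the only care needed is (i) making sure the permutation used to pass from $\alpha$ to the reversed monomial is legitimate, which is guaranteed because $S_n=\Sigma(\mc{P}^{G})$ acts on $\mc{P}$ permuting variables and fixes $\mc{P}^G$ pointwise, and (ii) aligning the index conventions between Proposition \ref{5.4} and the displayed product in the theorem (the condition $j_{n+1-r}\neq r$ versus $j_n\neq r$ is just a relabeling of which summand is excluded). The main obstacle — if there is one — is purely notational: verifying that the monomial factors of $x_1^{mn-1}\cdots x_n^{m-1}$ really are the $g$-images of the monomial factors of $\alpha$, i.e.\ that ``generates a block basis'' is preserved under the variable permutation, and that the recursive product identity transports correctly. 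No new estimates or constructions are required beyond what Section 4 already establishes.
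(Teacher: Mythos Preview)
Your proposal is correct and follows essentially the same approach as the paper: the paper's proof of Theorem \ref{5.6} is literally the one-line sentence ``Combining Corollary \ref{1}, Proposition \ref{5.4} and Lemma \ref{2}, we have one of our main results,'' and you have unpacked exactly that combination, including the use of Lemma \ref{critical} both to flip the generator via $S_n=\Sigma(\mc{P}^G)$ and to restrict the possible block generators in the ``only if'' direction.
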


\section{block basis for $ \mc{P}_{U_I} $}
Denote $ H_n=\ff_q[x_1,\cdots,x_n]^{U_I}. $ In this section, we will prove that there is a block basis for $ \mc{P}_{U_I}:=\mc{P}/(H_{n,+}). $

\subsection{} 
It will be useful introduce an algebra homomorphism $ \theta: \ff_q[x_1,\cdots,x_n] \ra \ff_q[x_1,\cdots,x_{m_{l-1}}] $ by $ \theta(x_i)=x_i $ for $ 1\leq i\leq m_{l-1} $ and $ \theta(x_i)=0 $ otherwise. Then $ \ker(\theta) $ is the ideal generated by $ x_{m_{l-1}+1},\cdots, x_n. $

\subsection{}Let $ \Psi_n=\{ x_1^{i_1}\cdots,x_n^{i_n}\mid \forall k,\ 0\leq i_k <\tau(k) \}. $ We will prove that $ \Psi_n $ is a basis for $ \mc{P}$ over $ \mc{P}^{U_I}. $ 
Namely, 
\begin{equation}
	f_I:=\prod_{i=1}^{n}x_i^{q^{\tau(i)}-1}= x_{m_1+1}^{q^{m_1}-1}\cdots x_{m_2}^{q^{m_1}-1}x_{m_2+1}^{q^{m_2}-1} \cdots x_{m_l}^{q^{m_{l-1}}-1}
\end{equation}
generates a block basis for $\mc{P}_{U_I}. $

\begin{lemma}\label{4.1}
	If $ \alpha\in \Psi_n, $ then $ x_{m_{l-1}+j}\alpha $ is in the $ \mc{P}^{U_I} $-module spanned by $ \Psi_n $ for all $ j=1,\cdots,n_l. $
\end{lemma}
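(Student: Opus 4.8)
The plan is to reduce the statement to the single-variable identity coming from Dickson theory for $\GL_{n_l}(q)$, after passing to the top block of variables. Recall from \eqref{U_I on A} that $\mc{P}^{U_I}=\ff_q[x_1,\dots,x_{m_{l-1}},v_{l,1},\dots,v_{l,n_l}]$, where $v_{l,j}=F_{m_{l-1}}(x_{m_{l-1}+j})$ has degree $q^{m_{l-1}}$, and recall $F_{m_{l-1}}(X)=X^{q^{m_{l-1}}}+\sum_{k=0}^{m_{l-1}-1}Q_{m_{l-1},k}X^{q^k}$ with the coefficients $Q_{m_{l-1},k}\in\ff_q[x_1,\dots,x_{m_{l-1}}]\subseteq \mc{P}^{U_I}$. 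An element $\alpha\in\Psi_n$ factors as $\alpha=\alpha'\cdot x_{m_{l-1}+1}^{i_{m_{l-1}+1}}\cdots x_{m_{l-1}+n_l}^{i_{m_{l-1}+n_l}}$ with $\alpha'$ a monomial in $x_1,\dots,x_{m_{l-1}}$ (which already lies in $\mc{P}^{U_I}$) and $0\le i_{m_{l-1}+s}<q^{m_{l-1}}$ for each $s$. Multiplying by such a central-block factor $\alpha'$ is harmless, so I am reduced to showing: for $0\le a<q^{m_{l-1}}$ and $1\le j\le n_l$, the product $x_{m_{l-1}+j}\cdot x_{m_{l-1}+j}^{a}$ is, modulo the ideal generated by the $v_{l,t}$'s and with $\mc{P}^{U_I}$-coefficients, a combination of monomials $x_{m_{l-1}+1}^{b_1}\cdots x_{m_{l-1}+n_l}^{b_{n_l}}$ with all $b_t<q^{m_{l-1}}$.

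The key step is the case $a=q^{m_{l-1}}-1$, i.e. rewriting $x_{m_{l-1}+j}^{q^{m_{l-1}}}$. From the defining equation $F_{m_{l-1}}(x_{m_{l-1}+j})=v_{l,j}$ one gets
\begin{equation}
x_{m_{l-1}+j}^{q^{m_{l-1}}}=v_{l,j}-\sum_{k=0}^{m_{l-1}-1}Q_{m_{l-1},k}\,x_{m_{l-1}+j}^{q^{k}},
\end{equation}
and since each $q^k<q^{m_{l-1}}$ for $k\le m_{l-1}-1$ and $v_{l,j}$ lies in the ideal $(H_{n,+})$ defining $\mc{P}_{U_I}$, the right-hand side already expresses $x_{m_{l-1}+j}^{q^{m_{l-1}}}$, modulo the ideal, as a $\mc{P}^{U_I}$-combination of strictly lower powers of $x_{m_{l-1}+j}$ (times the identity in the other top-block variables). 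For a general $\alpha\in\Psi_n$, the only exponent that can reach $q^{m_{l-1}}-1$ is $i_{m_{l-1}+j}$ for the variable being multiplied; if $i_{m_{l-1}+j}<q^{m_{l-1}}-1$ there is nothing to do, and if $i_{m_{l-1}+j}=q^{m_{l-1}}-1$ we substitute the displayed identity, obtaining a $\mc{P}^{U_I}$-linear combination of monomials each of which still has all top-block exponents $<q^{m_{l-1}}$ (the other top-block exponents $i_{m_{l-1}+t}$, $t\ne j$, are untouched and were already $<q^{m_{l-1}}$ since $\alpha\in\Psi_n$). Collecting the $\ff_q[x_1,\dots,x_{m_{l-1}}]$-factor $\alpha'$ back in and noting $\alpha'\cdot(\text{monomial in top block})\in\Psi_n$, we conclude $x_{m_{l-1}+j}\alpha$ lies in the $\mc{P}^{U_I}$-span of $\Psi_n$.

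I expect the main obstacle to be purely bookkeeping rather than conceptual: one must make sure that after the substitution no monomial with an exponent $\ge q^{m_{l-1}}$ is produced in \emph{any} of the $n_l$ top-block variables, which is immediate here because the substitution only lowers the single exponent $i_{m_{l-1}+j}$ and the coefficients $Q_{m_{l-1},k}$ and $v_{l,j}$ involve no top-block variables at all — so the other top-block exponents are genuinely inert. (This is exactly why the lemma is phrased for the top block $m_{l-1}+1,\dots,n$: for a lower block the corresponding Dickson-type relation would reintroduce higher-block variables.) A small technical point to verify is that $\Psi_n$ is closed under multiplication by such $\alpha'$, i.e. that $\ff_q[x_1,\dots,x_{m_{l-1}}]\subseteq\mc{P}^{U_I}$, which is part of \eqref{U_I on A}, so the $\mc{P}^{U_I}$-module structure does absorb it.
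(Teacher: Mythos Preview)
Your argument is essentially the paper's: reduce to the single overflowing variable $x_{m_{l-1}+j}$, rewrite $x_{m_{l-1}+j}^{q^{m_{l-1}}}$ via $v_{l,j}=F_{m_{l-1}}(x_{m_{l-1}+j})$, and observe that the coefficients $v_{l,j}$ and $Q_{m_{l-1},k}$ lie in $\mc{P}^{U_I}$ while the remaining monomial factor still sits in $\Psi_n$.

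One point does need correction, though it does not break the argument. You assert (twice) that $\ff_q[x_1,\dots,x_{m_{l-1}}]\subseteq\mc{P}^{U_I}$ and attribute this to \eqref{U_I on A}. That is a misreading: \eqref{U_I on A} says $\mc{P}^{U_I}=\ff_q[x_1,\dots,x_{n_1},v_{2,1},\dots,v_{l,n_l}]$, so only the first $n_1$ of the $x$'s are invariant, not the first $m_{l-1}$. In particular your factor $\alpha'$ is \emph{not} in $\mc{P}^{U_I}$ in general, and the phrase ``multiplying by $\alpha'$ is harmless'' cannot be justified that way. Fortunately you never actually use this: the step that carries the proof is the one you state afterwards, namely that $\alpha'\cdot(\text{top-block monomial with exponents}<q^{m_{l-1}})$ lies in $\Psi_n$, because $\alpha'$ keeps its original exponents $i_k<q^{\tau(k)}$ for $k\le m_{l-1}$ and contributes nothing in the top block. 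That is exactly how the paper concludes as well (it keeps $x_1^{i_1}\cdots x_{n-1}^{i_{n-1}}$ as the $\Psi_n$-element and puts $v_{l,n_l}$, $Q_{m_{l-1},k}$ in the $\mc{P}^{U_I}$-coefficients). So: delete the two claims about $\alpha'\in\mc{P}^{U_I}$ and the argument stands.
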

\begin{proof}
	Suppose $ \alpha=x_1^{i_1}\cdots,x_n^{i_n}\in \Psi_n. $ Without loss of generality, one can assume $ j=n_l, $ and hence $ m_{l-1}+j=n. $ 
	
	If $ i_n< q^{m_{l-1}}-1, $ then $ x_n\alpha\in \Psi_n $ and Lemma holds. Now, if $ i_n= q^{m_{l-1}}-1, $ then $ x_n\alpha=x_1^{i_1}\cdots x_{n-1}^{i_{n-1}}x_n^{q^{m_{l-1}}}. $  By \ref{vij}, \[ v_{l,n_l} = F_{m_{l-1}}(x_n)=x_n^{q^{m_{l-1}}} +\sum_{i=1}^{m_{l-1}}Q_{m_{l-1},i} x_n^{q^{m_{l-1}-i}}. \]
	Therefore, \[  x_n^{q^{m_{l-1}}}= v_{l,n_l} -\sum_{i=1}^{m_{l-1}}Q_{m_{l-1},i} x_n^{q^{m_{l-1}-i}}, \]
	and $$ x_n\alpha= v_{l,n_l}x_1^{i_1}\cdots x_{n-1}^{i_{n-1}} -\sum_{i=1}^{m_{l-1}}Q_{m_{l-1},i} x_1^{i_1}\cdots x_{n-1}^{i_{n-1}}x_n^{q^{m_{l-1}-i}}. $$
	
	Since $ Q_{m_{l-1},i}\in  \mc{P}^{G_I} \subseteq \mc{P}^{U_I} $ and $ v_{l,n_l}\in \mc{P}^{U_I}. $ Lemma holds.
\end{proof}

\begin{theorem}\label{U_I}
	$ \Psi_n $ is a basis for $ \mc{P}$ over $ \mc{P}^{U_I}. $ Namely, $ f_I $ generates a block basis  for $\mc{P}_{U_I}. $
\end{theorem}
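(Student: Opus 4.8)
The plan is to induct on the number $l$ of blocks, peeling off the last block with the help of the homomorphism $\theta$. When $l=1$ we have $U_I=\{I_n\}$, every $\tau(k)$ equals $m_0=0$, so $\Psi_n=\{1\}$ and $f_I=1$ while $\mc{P}^{U_I}=\mc{P}$, and the statement is trivial. So assume $l\ge 2$, set $I'=(n_1,\dots,n_{l-1})$ (a composition of $m_{l-1}$) with its group $U_{I'}\le\GL_{m_{l-1}}(q)$, and write $\mc{P}'=\ff_q[x_1,\dots,x_{m_{l-1}}]$ and $\mc{A}'=(\mc{P}')^{U_{I'}}$. Applying \ref{U_I on A} to $I'$ --- equivalently, observing $\theta(\mc{P}^{U_I})=\mc{A}'$ --- gives $\mc{A}'=\ff_q[x_1,\dots,x_{n_1},v_{2,1},\dots,v_{l-1,n_{l-1}}]$, and then \ref{U_I on A} for $I$ reads $\mc{P}^{U_I}=\mc{A}'[v_{l,1},\dots,v_{l,n_l}]$, a polynomial ring over $\mc{A}'$ (the $v_{l,j}$ are algebraically independent over $\mc{A}'$ because $\mc{P}^{U_I}$ is a polynomial ring on all $n$ of the listed generators). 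The inductive hypothesis is that $\mc{P}'$ is $\mc{A}'$-free with basis $\Psi'_{m_{l-1}}$, the set of monomial factors of $\prod_{k\le m_{l-1}}x_k^{q^{\tau(k)}-1}$.

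Writing $y_j=x_{m_{l-1}+j}$ for $1\le j\le n_l$, the first step is to record, from \ref{vij} and the definition of $F_{m_{l-1}}$, that
\[
v_{l,j}=F_{m_{l-1}}(y_j)=y_j^{\,q^{m_{l-1}}}+\sum_{k=0}^{m_{l-1}-1}Q_{m_{l-1},k}\,y_j^{\,q^{k}},
\]
a polynomial in the single variable $y_j$ that is monic of degree $q^{m_{l-1}}$ and whose coefficients $Q_{m_{l-1},k}$ lie in $\mc{P}^{G_I}\cap\mc{P}'\subseteq\mc{A}'$. Since each $v_{l,j}$ involves only $y_j$ over $\mc{A}'$, dividing successively by $v_{l,1},\dots,v_{l,n_l}$ shows that $\mc{A}'[y_1,\dots,y_{n_l}]$ is a free module over $\mc{A}'[v_{l,1},\dots,v_{l,n_l}]=\mc{P}^{U_I}$ with basis $\{\,y_1^{c_1}\cdots y_{n_l}^{c_{n_l}}\mid 0\le c_j\le q^{m_{l-1}}-1\,\}$; this is exactly the content of Lemma \ref{4.1}, iterated over the last-block variables.

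It remains to splice the two freeness statements. Base-changing the inductive hypothesis along $\mc{A}'\hookrightarrow\mc{A}'[y_1,\dots,y_{n_l}]$ and using $\mc{P}=\mc{P}'\otimes_{\mc{A}'}\mc{A}'[y_1,\dots,y_{n_l}]$, we get that $\mc{P}$ is $\mc{A}'[y_1,\dots,y_{n_l}]$-free with basis $\Psi'_{m_{l-1}}$; composing with the previous step, $\mc{P}$ is $\mc{P}^{U_I}$-free with basis $\{\,\alpha'\,y_1^{c_1}\cdots y_{n_l}^{c_{n_l}}\mid\alpha'\in\Psi'_{m_{l-1}},\ 0\le c_j\le q^{m_{l-1}}-1\,\}$. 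Since $\tau(m_{l-1}+j)=m_{l-1}$ for every $j$, comparison with the definitions of $\Psi_n$ and of $f_I$ shows this set equals $\Psi_n$, the set of monomial factors of $f_I$; hence $f_I$ generates a block basis for $\mc{P}_{U_I}$, completing the induction.

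I expect the only genuinely delicate point to be the claim $Q_{m_{l-1},k}\in\mc{A}'$, i.e.\ that these Dickson classes are $U_{I'}$-invariant; this is already used in the proof of Lemma \ref{4.1} and holds because $G_I$ (hence $U_{I'}$) preserves $\langle x_1,\dots,x_{m_{l-1}}\rangle$ and acts on it through a subgroup of $\GL_{m_{l-1}}(q)$. If one prefers a finish that avoids the module-theoretic bookkeeping, one can instead check $|\Psi_n|=\prod_{k=1}^{n}q^{\tau(k)}=\prod_{i=2}^{l}q^{\,m_{i-1}n_i}=\dim_{\ff_q}\mc{P}_{U_I}$ --- the last equality from the Hilbert series, since the $n$ generators of $\mc{P}^{U_I}$ form a homogeneous system of parameters for the Cohen--Macaulay ring $\mc{P}$ --- and then it suffices to show $\Psi_n$ spans $\mc{P}$ over $\mc{P}^{U_I}$, which follows by induction on degree from Lemma \ref{4.1} and its evident analogues for the other blocks.
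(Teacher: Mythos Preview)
Your argument is correct. Both you and the paper induct by peeling off the last block via $\theta$, but the ways you finish differ. The paper first observes that $|\Psi_n|=\dim_{\ff_q}\mc{P}_{U_I}$ and then shows $\Psi_n$ spans by a secondary induction on the degree of $f$, invoking Lemma~\ref{4.1} to absorb each extra factor $x_{m_{l-1}+j}$; this is exactly the alternative you sketch in your last paragraph. You instead prove freeness outright, factoring the extension $\mc{P}^{U_I}\subset\mc{P}$ as $\mc{P}^{U_I}=\mc{A}'[v_{l,1},\dots,v_{l,n_l}]\subset\mc{A}'[y_1,\dots,y_{n_l}]\subset\mc{P}$, using that each $v_{l,j}$ is monic in $y_j$ over $\mc{A}'$ for the first step and base-changing the inductive hypothesis for the second. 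Your route is a bit more structural and avoids the degree induction and the separate cardinality count; the paper's route is more elementary in that it never needs to name the intermediate ring $\mc{A}'[y_1,\dots,y_{n_l}]$ or appeal to tensor products, only to Lemma~\ref{4.1}. Either way the essential input is the same: $v_{l,j}=y_j^{q^{m_{l-1}}}+(\text{lower terms with coefficients in }\mc{A}')$.
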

\begin{proof}
	Since the cardinality of $ \Psi_n $ equals the dimension of $ \mc{P}_{U_I}, $ it is
	sufficient to prove that $ \Psi_n $ is a spanning set. We use induction on $ n. $
	
	For $ n=1, $ the result is trivial.
	
	For $ n>1, $ we use induction on the degree. The result is clear for polynomials of degree zero. Now, suppose $ f\in \mc{P}$ with $ \deg(f)>0. $ By the primary induction hypothesis $$ \theta(f)=\sum_{\beta\in \Psi_{m_{l-1}}} c_{\beta}\beta $$ for some  $ c_{\beta}\in H_{m_{l-1}}. $ The kernel of $ \theta $ is the ideal generated by $ x_{m_{l-1}+1},\cdots, x_n $ and therefore $ f=\theta(f) +x_{m_{l-1}+j}\pr{f} $ for some $ m_{l-1}<j\leq n $ and $ \pr{f}\in \mc{P}. $ Since $ \deg(\pr{f}) <\deg(f), $ by the secondary induction hypothesis, 
	\[ f=\theta(f) +x_{m_{l-1}+j}\pr{f}=\sum_{\beta\in \Psi_{m_{l-1}}} c_{\beta}\beta+\sum_{\gamma\in \Psi_{n}} d_{\gamma}x_{m_{l-1}+j}\gamma. \]
	
	By Lemma \ref{4.1}, $ x_{m_{l-1}+j}\gamma $ is in the span of $ \Psi_n. $ Note that $ \Psi_{m_{l-1}}\subseteq\Psi_n $ and $ H_{m_{l-1}}\subseteq H_n.$ Therefore, $ f$ is in the $ H_n $ span of $ \Psi_n. $ 
\end{proof}

\subsection{products of block basis}
\begin{lemma}\label{5.3}
	For each $ 0\leq i\leq l-1,\ 1\leq j\leq n_i, $
	$ x_{m_{i}+j} ^{q^{m_{i}}}\equiv 0 $ in $ \mc{P}_{U_I}. $
\end{lemma}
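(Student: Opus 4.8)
The plan is to write $ x_{m_{i}+j}^{q^{m_{i}}} $ as an explicit $ \mc{P}^{U_I} $-linear combination of homogeneous invariants of positive degree, together with lower $ q $-power terms $ x_{m_i+j}^{q^k} $ whose coefficients are themselves positive-degree invariants, and then to observe that every term of this expression lies in the ideal $ (H_{n,+}) $.

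First I would recall from \ref{vij} that the generator $ v_{i+1,j} $ of $ \mc{P}^{U_I} $ is exactly $ F_{m_i}(x_{m_i+j}) $, and that by the defining expansion of $ F_{m_i} $ one has $ F_{m_i}(X)=X^{q^{m_i}}+\sum_{k=0}^{m_i-1}Q_{m_i,k}X^{q^k} $ with $ Q_{m_i,k}=Q_{m_i,k}(x_1,\cdots,x_{m_i}) $ homogeneous of degree $ q^{m_i}-q^k $. Substituting $ X=x_{m_i+j} $ and isolating the leading term in $ X $ gives
\[ x_{m_i+j}^{q^{m_i}} = v_{i+1,j}-\sum_{k=0}^{m_i-1}Q_{m_i,k}\,x_{m_i+j}^{q^k}, \]
where for $ i=0 $ the sum is empty (as $ m_0=0 $) and the identity degenerates to $ x_j=v_{1,j} $.

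Second I would argue that each summand on the right side lies in $ (H_{n,+}) $. By \ref{U_I on A}, $ v_{i+1,j} $ is one of the polynomial generators of $ \mc{P}^{U_I} $ (namely $ x_j $ with $ j\le n_1 $ when $ i=0 $, and $ v_{i+1,j} $ when $ i\ge 1 $) and has positive degree $ q^{m_i} $, so $ v_{i+1,j}\equiv 0 $. Each $ Q_{m_i,k} $ likewise lies in $ \mc{P}^{G_I}\subseteq\mc{P}^{U_I} $ -- this is precisely the invariance invoked in the proof of Lemma \ref{4.1}, which comes from the fact that $ U_I $ (indeed $ G_I $, see \ref{G}) preserves the span $ \langle x_1,\cdots,x_{m_i}\rangle $ and hence fixes $ F_{m_i}(X) $ coefficientwise -- and $ Q_{m_i,k} $ has positive degree $ q^{m_i}-q^k>0 $ since $ k<m_i $; therefore $ Q_{m_i,k}\equiv 0 $ and $ Q_{m_i,k}\,x_{m_i+j}^{q^k}\in(H_{n,+}) $. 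Adding these up, $ x_{m_i+j}^{q^{m_i}}\equiv 0 $ in $ \mc{P}_{U_I} $, as claimed.

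I do not expect a genuine obstacle: the displayed identity is just the defining relation of the Dickson-type polynomial $ F_{m_i} $ read modulo the invariant ideal, and the only point deserving an explicit sentence is the $ U_I $-invariance and positivity of the coefficients $ Q_{m_i,k} $, both immediate from the material of Section 3 and from the computation already carried out in Lemma \ref{4.1}. The same manipulation, performed modulo the ideal generated by the positive-degree $ G_I $-invariants rather than modulo $ (H_{n,+}) $, is what will feed into the block basis of $ \mc{P}_{G_I} $ in Section 6.
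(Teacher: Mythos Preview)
Your argument is correct and is exactly the paper's approach: expand $v_{i+1,j}=F_{m_i}(x_{m_i+j})$ via the Dickson relation, isolate $x_{m_i+j}^{q^{m_i}}$, and note that both $v_{i+1,j}$ and every $Q_{m_i,k}$ are positive-degree elements of $\mc{P}^{U_I}$. Your write-up is simply more explicit about the invariance of the $Q_{m_i,k}$ and about the degenerate case $i=0$, but there is no substantive difference.
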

\begin{proof}
	By \ref{vij}, $ v_{i+1,j} = F_{m_{i}}(x_{m_{i}+j})=x_{m_{i}+j} ^{q^{m_{i}}} +\sum_{k=1}^{m_{i}}Q_{m_{i},k} x_{m_{i}+j}^{q^{m_{i}-k}}. $
	All $ v_{i+1,j} $ and $ Q_{m_{i},k} $ lie in $ \mc{I}. $ Hence lemma holds.
\end{proof}
\begin{corollary}\label{5.41}
	$ \mc{P}_{U_I}\iso \bten_{i=1}^n \fk[x_i]/(x_i^{q^{\tau(i)}}). $
\end{corollary}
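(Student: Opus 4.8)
The plan is to exhibit the right-hand side as an explicit quotient of $\mc{P}$ and identify it with $\mc{P}_{U_I}$ by matching bases. Set $J\se\mc{P}$ to be the ideal generated by $x_1^{q^{\tau(1)}},\dots,x_n^{q^{\tau(n)}}$. Decomposing $\mc{P}=\bten_{i=1}^{n}\ff_q[x_i]$ and passing to the quotient factor by factor yields a canonical algebra isomorphism
\[
	\mc{P}/J\;\iso\;\bten_{i=1}^{n}\ff_q[x_i]/(x_i^{q^{\tau(i)}}),
\]
under which the class of $x_i$ corresponds to the image of $x_i$ in the $i$-th tensor factor. So it suffices to produce an algebra isomorphism $\mc{P}/J\iso\mc{P}_{U_I}=\mc{P}/\mc{I}$, where $\mc{I}=(H_{n,+})$.

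First I would check that $J\se\mc{I}$. The pairs $(i,j)$ appearing in Lemma \ref{5.3}, with $0\le i\le l-1$, parametrize via $s=m_i+j$ exactly the indices $s\in\{1,\dots,n\}$, and for such $s$ one has $\tau(s)=m_i$; hence Lemma \ref{5.3} says precisely that each generator $x_s^{q^{\tau(s)}}$ of $J$ lies in $\mc{I}$. Therefore the quotient map $\mc{P}\sur\mc{P}/\mc{I}$ annihilates $J$ and induces a surjective algebra homomorphism $\pi\colon\mc{P}/J\sur\mc{P}_{U_I}$.

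It remains to see that $\pi$ is injective. The monomials $x_1^{i_1}\cdots x_n^{i_n}$ with $0\le i_k<q^{\tau(k)}$ form an $\ff_q$-basis of $\mc{P}/J$, and $\pi$ sends them to the classes of the very same monomials in $\mc{P}_{U_I}$ — that is, to the set of all monomial factors of $f_I=\prod_{i=1}^{n}x_i^{q^{\tau(i)}-1}$, which by Theorem \ref{U_I} is an $\ff_q$-basis of $\mc{P}_{U_I}$. Thus $\pi$ carries a basis bijectively onto a basis, so it is an isomorphism of algebras; composing with the identification of the first paragraph proves the corollary. Equivalently, one may just observe that $\dim_{\ff_q}\mc{P}/J=\prod_{i=1}^{n}q^{\tau(i)}=\abs{\Psi_n}=\dim_{\ff_q}\mc{P}_{U_I}$, so the surjection $\pi$ is forced to be bijective.

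There is no substantial obstacle here: the statement is essentially the combination of Lemma \ref{5.3} (which supplies $J\se\mc{I}$) with Theorem \ref{U_I} (which pins down the dimension, equivalently a monomial basis, of $\mc{P}_{U_I}$). The only points requiring a moment of care are the routine index bookkeeping — that $s\mapsto(i,j)$ with $s=m_i+j$ is a bijection onto $\{1,\dots,n\}$ and that $\tau(m_i+j)=m_i$ — and, if one prefers the dimension argument, the elementary facts that $\dim_{\ff_q}\ff_q[x_i]/(x_i^{q^{\tau(i)}})=q^{\tau(i)}$ and that dimension is multiplicative under tensor product.
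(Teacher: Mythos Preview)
Your proof is correct. The paper does not give a standalone proof of the corollary; instead, the Remark immediately following it derives the statement by invoking \cite[Theorem 4.7]{CHSW}: since $\Sigma(H_n)=\{1\}$, the critical monomials associated to $f_I$ are $x_j^{q^{\tau(j)}}$, and Lemma~\ref{5.3} shows these are trivial, so the cited theorem yields the tensor decomposition directly.

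Your argument is a self-contained unpacking of exactly what that citation would do in this situation: Lemma~\ref{5.3} gives $J\se\mc{I}$, hence a surjection $\mc{P}/J\sur\mc{P}_{U_I}$, and Theorem~\ref{U_I} supplies the matching basis (or dimension) to force bijectivity. The only difference is that you carry out the dimension/basis comparison explicitly rather than appealing to the general result in \cite{CHSW}; this buys independence from the external reference at the cost of a few more lines. The index bookkeeping you flag (that $s\mapsto(i,j)$ with $s=m_i+j$ is a bijection and $\tau(m_i+j)=m_i$) is exactly right; note incidentally that the range in the paper's statement of Lemma~\ref{5.3} should read $1\le j\le n_{i+1}$ rather than $n_i$, as its proof and your reading both indicate.
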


\begin{remark}
Note that the hypothesis of Lemma \ref{critical2} are satisfied by monomial $ f_I. $ This gives another proof of the existence of a block basis for $ \mc{P}_{U_I}. $ In fact,  $ \Sigma(H_n)=\{1\}. $ And the critical monomials associated to $ f_I $ are $ x_j^{q^{\tau(j)}}. $ By \ref{5.3}, $ x_j^{q^{\tau(j)}}\equiv 0. $
Moreover, by \cite[Theorem 4.7]{CHSW}, one get Corollary \ref{5.41}.
\end{remark}

\section{Block basis for $ \mc{P}_{G_I} $}
\subsection{}
For $ 1\leq k\leq l, $ denote $ \tilde{G}_k=\left( 
\begin{matrix}G_1 & * & \cdots & *\\ 
0 & G_2 &\cdots & *\\
\vdots & \vdots & \vdots & \vdots\\
0 & 0 & \cdots & G_k
\end{matrix}
\right), \ \mc{Q}_k=\ff_q[x_1,\cdots,x_{m_k}], $ $ \mc{I}_k= \left(\mc{Q}_{k,+}^{\tilde{G}_k}\right) $ and $ \mc{P}_k=\ff_q[x_{m_{k-1}+1}, \cdots,x_{m_{k-1}+n_k}]. $

For each $ 1\leq i\leq l, $ suppose $ x_{m_{i-1}+1}^{\alpha_{i1}}\cdots x_{m_i}^{\alpha_{i,n_i}} $ generates a block basis for $ \mc{P}_i $ over $ \mc{P}_i^{G_i}. $

\begin{lemma}
	Keep notations as above. $ \mc{P} $ is a free $ \mc{P}^{G_I} $ module with a basis consisting of 
	$$ \left\{\prod_{k=1}^{n}x_k^{a_k}\prod_{i=1}^{l}\prod_{j=1}^{n_i} v_{ij} ^{b_{ij}}\ \vline\ 0\leq a_k <\tau(k),\ 0\leq b_{ij}<\alpha_{ij} \right\}.$$
\end{lemma}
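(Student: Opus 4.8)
The plan is to realize the asserted collection as a product basis, obtained by stacking the block basis of each diagonal block $\mc{P}_i$ over $\mc{P}_i^{G_i}$ on top of the basis $\Psi_n$ of $\mc{P}$ over $\mc{P}^{U_I}$ furnished by Theorem~\ref{U_I}, after identifying the extension $\mc{P}^{G_I}\subseteq\mc{P}^{U_I}$ with a tensor product of the extensions $\mc{P}_i^{G_i}\subseteq\mc{P}_i$. I would begin by unpacking the structural input. By \eqref{U_I on A} (together with $v_{1,j}=F_0(x_j)=x_j$), the ring $\mc{P}^{U_I}=\ff_q[v_{ij}\mid 1\le i\le l,\ 1\le j\le n_i]$ is a genuine polynomial ring on the $n$ generators $v_{ij}$, and $\mc{P}^{G_I}=\ff_q[u_{ij}]$ with $u_{ij}=e_{ij}(v_{i,1},\cdots,v_{i,n_i})$ as recalled above. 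Grouping the $v_{ij}$, resp.\ the $u_{ij}$, by the first index yields $\ff_q$-algebra factorizations $\mc{P}^{U_I}=\bigotimes_{i=1}^{l}\ff_q[v_{i,1},\cdots,v_{i,n_i}]$ and $\mc{P}^{G_I}=\bigotimes_{i=1}^{l}\ff_q[u_{i,1},\cdots,u_{i,n_i}]$, compatible with the inclusion $\mc{P}^{G_I}\subseteq\mc{P}^{U_I}$.

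On the monomial side, $\mc{P}=\ff_q[x_1,\cdots,x_n]=\bigotimes_{i=1}^{l}\mc{P}_i$ (disjoint variable sets), and for each $i$ the substitution $x_{m_{i-1}+j}\mapsto v_{ij}$ defines an $\ff_q$-algebra isomorphism $\phi_i\colon\mc{P}_i\xrightarrow{\sim}\ff_q[v_{i,1},\cdots,v_{i,n_i}]$ that sends $e_{ij}$ to $u_{ij}$, hence restricts to an isomorphism $\mc{P}_i^{G_i}\xrightarrow{\sim}\ff_q[u_{i,1},\cdots,u_{i,n_i}]$. Consequently $\Phi:=\bigotimes_{i=1}^{l}\phi_i\colon\mc{P}\xrightarrow{\sim}\mc{P}^{U_I}$ is an $\ff_q$-algebra isomorphism under which the inclusion $\bigotimes_i\mc{P}_i^{G_i}\subseteq\bigotimes_i\mc{P}_i$ corresponds to $\mc{P}^{G_I}\subseteq\mc{P}^{U_I}$.

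I would then transport the block-basis hypotheses. By assumption, for each $i$ the ring $\mc{P}_i$ is free over $\mc{P}_i^{G_i}$ with basis the set of all monomial factors of $x_{m_{i-1}+1}^{\alpha_{i1}}\cdots x_{m_i}^{\alpha_{i,n_i}}$, i.e., $\{\prod_{j}x_{m_{i-1}+j}^{b_{ij}}\mid 0\le b_{ij}\le\alpha_{ij}\}$. Since an $\ff_q$-tensor product of free modules is free with the evident product basis, $\bigotimes_i\mc{P}_i$ is free over $\bigotimes_i\mc{P}_i^{G_i}$ with basis $\{\prod_{i,j}x_{m_{i-1}+j}^{b_{ij}}\mid 0\le b_{ij}\le\alpha_{ij}\}$, and pushing forward along $\Phi$ shows $\mc{P}^{U_I}$ is free over $\mc{P}^{G_I}$ with basis $\{\prod_{i,j}v_{ij}^{b_{ij}}\mid 0\le b_{ij}\le\alpha_{ij}\}$. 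Finally, Theorem~\ref{U_I} gives $\mc{P}$ free over $\mc{P}^{U_I}$ with basis $\Psi_n=\{\prod_{k=1}^{n}x_k^{a_k}\mid 0\le a_k<q^{\tau(k)}\}$, and transitivity of freeness along the tower $\mc{P}^{G_I}\subseteq\mc{P}^{U_I}\subseteq\mc{P}$ (if $B$ is free over $C$ with basis $S$ and $A$ is free over $B$ with basis $T$, then $A$ is free over $C$ with basis $\{st\mid s\in S,\ t\in T\}$) yields exactly the asserted basis $\{\prod_k x_k^{a_k}\prod_{i,j}v_{ij}^{b_{ij}}\}$. As a consistency check, its cardinality $\prod_k q^{\tau(k)}\cdot\prod_{i,j}(\alpha_{ij}+1)$ equals $\prod_{i,j}q^{m_{i-1}}\deg(e_{ij})=\prod_{i,j}\deg(u_{ij})=\dim_{\ff_q}\mc{P}_{G_I}$, the middle equality using Lemma~\ref{critical} applied to each $\mc{P}_i$. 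I do not anticipate a deep obstacle. The point that most needs care is that $\mc{P}^{U_I}$ is genuinely a polynomial ring on the $v_{ij}$ and $\mc{P}^{G_I}$ genuinely one on the $u_{ij}$: this is precisely what makes the two tensor factorizations, and hence the compatibility of $\Phi$ with the two inclusions, legitimate. The remaining ingredients — freeness under $\ff_q$-tensor products and freeness along a tower — are routine direct-sum bookkeeping.
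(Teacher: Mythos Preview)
Your argument is correct and follows essentially the same route as the paper: both factor $\mc{P}^{U_I}=\bigotimes_{i}\mc{R}_i$ with $\mc{R}_i=\ff_q[v_{i,1},\ldots,v_{i,n_i}]$, transport the block-basis hypothesis on $\mc{P}_i^{G_i}\subseteq\mc{P}_i$ to $(\mc{R}_i)^{G_i}\subseteq\mc{R}_i$ via the substitution $x_{m_{i-1}+j}\mapsto v_{ij}$, and then stack the resulting basis of $\mc{P}^{U_I}$ over $\mc{P}^{G_I}$ with the basis $\Psi_n$ of $\mc{P}$ over $\mc{P}^{U_I}$ from Theorem~\ref{U_I}. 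Your index ranges $0\le a_k<q^{\tau(k)}$ and $0\le b_{ij}\le\alpha_{ij}$ are the correct ones; the paper's stated bounds $<\tau(k)$ and $<\alpha_{ij}$ are typos (compare $f_I=\prod_k x_k^{q^{\tau(k)}-1}$ and the definition of a block basis).
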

\begin{proof}
 Let $ \mc{R}_i=\ff_q[v_{i,1},\cdots,v_{i,n_i}] $ be a polynomial ring with generators $ v_{ij},\ 1\leq j\leq n_i. $ Then
\[  \mc{R}_i =\bds_{j=1}^{n_i}\bds_{b_{ij}=1}^{\alpha_{ij}} v_{i,1}^{b_{i1}}\cdots v_{i,n_i}^{b_{i,n_i}}(\mc{R}_i) ^{G_i}.  \]

By Theorem \ref{U_I}, $ f_I=\prod_{i=1}^{n}x_i^{q^{\tau(i)}-1} $ generates a block basis for $ \mc{P}_{U_I}. $ Equivalently,
\[ \mc{P}=\bds_{i=1}^n\bds_{a_i=0}^{\tau(i)-1} x_1^{a_1} \cdots x_n^{a_n}\mc{P}^{U_I}. \]

Since $ \mc{P}^{U_I}=\mc{R}_1\ten \cdots \ten \mc{R}_l, $ lemma holds.
\end{proof}

\begin{lemma}
For all $ 1\leq i\leq l, 1\leq j\leq n_i,\ v_{ij}\equiv x_{m_{i-1}+j}^{q^{m_{i-1}}} .$
\end{lemma}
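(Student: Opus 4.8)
The plan is to expand $v_{ij}$ via its defining formula and show that the correction terms already lie in the coinvariant ideal $\left(\mc{P}^{G_I}_+\right)$. By \ref{vij} and the definition of $F_{m_{i-1}}$,
\[
v_{ij}=F_{m_{i-1}}(x_{m_{i-1}+j})=x_{m_{i-1}+j}^{q^{m_{i-1}}}+\sum_{k=0}^{m_{i-1}-1}Q_{m_{i-1},k}\,x_{m_{i-1}+j}^{q^{k}} .
\]
When $i=1$ we have $m_0=0$, the sum is empty, and $v_{1,j}=x_j=x_j^{q^{m_0}}$, so there is nothing to prove; from now on assume $i\geq 2$. It therefore suffices to show that each $Q_{m_{i-1},k}$ $(0\leq k\leq m_{i-1}-1)$ lies in $\mc{P}^{G_I}_+$, because then every summand $Q_{m_{i-1},k}\,x_{m_{i-1}+j}^{q^{k}}$ belongs to $\left(\mc{P}^{G_I}_+\right)$ and hence $v_{ij}\equiv x_{m_{i-1}+j}^{q^{m_{i-1}}}$.

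The \emph{key step} — already invoked in the proof of Lemma \ref{4.1} — is that $Q_{m_{i-1},k}\in\mc{P}^{G_I}$, and I would establish it directly from the block shape \ref{G}. The polynomial $Q_{m_{i-1},k}$ involves only the variables $x_1,\dots,x_{m_{i-1}}$, and by \ref{Dickson2} it is fixed by the full action of $\GL_{m_{i-1}}(q)$ on $\ff_q[x_1,\dots,x_{m_{i-1}}]$. Now every $g\in G_I$ has zero entries strictly below the block diagonal, so it maps the subspace $W_{i-1}:=\langle x_1,\dots,x_{m_{i-1}}\rangle$ into itself; consequently $g$ restricts to an algebra automorphism of $\ff_q[x_1,\dots,x_{m_{i-1}}]$, namely the one induced by the upper-left $m_{i-1}\times m_{i-1}$ block of $g$, which is an element of $\tilde{G}_{i-1}\subseteq\GL_{m_{i-1}}(q)$. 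Hence $g$ fixes $Q_{m_{i-1},k}$; as this holds for all $g\in G_I$, we get $Q_{m_{i-1},k}\in\mc{P}^{G_I}$.

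To finish I would record the positive-degree point: $\deg Q_{m_{i-1},k}=q^{m_{i-1}}-q^{k}\geq q^{m_{i-1}-1}(q-1)>0$ for $0\leq k\leq m_{i-1}-1$, so indeed $Q_{m_{i-1},k}\in\mc{P}^{G_I}_+$, and combining with the expansion above yields $v_{ij}\equiv x_{m_{i-1}+j}^{q^{m_{i-1}}}$ for all $1\leq i\leq l$ and $1\leq j\leq n_i$. I do not expect a genuine obstacle here; the only point that requires a little care is the verification that $G_I$ stabilises the coordinate subspace $W_{i-1}$ and acts on it through $\tilde{G}_{i-1}$, which is pure bookkeeping with the block-triangular form \ref{G}.
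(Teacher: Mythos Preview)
Your proof is correct and follows the same approach as the paper: expand $v_{ij}$ via \ref{vij} and observe that each $Q_{m_{i-1},k}$ lies in $\mc{P}^{G_I}_+$, which the paper phrases as $Q_{m_{i-1},k}\in\mc{I}_{i-1}\subseteq\mc{I}$. You have simply spelled out in more detail the block-triangular reason why the Dickson invariants in $x_1,\dots,x_{m_{i-1}}$ are $G_I$-invariant, a fact the paper takes for granted (and already used in the proof of Lemma~\ref{4.1}).
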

\begin{proof}
		By \ref{vij}, $ v_{i,j}=x_{m_{i-1}+j} ^{q^{m_{i-1}}} +\sum_{k=1}^{m_{i-1}}Q_{m_{i-1},k} x_{m_{i-1}+j}^{q^{m_{i-1}-k}}. $ Since $ Q_{m_{i-1},k}\in \mc{I}_{i-1}\subseteq \mc{I}, $ lemma holds.
\end{proof}

\begin{theorem}\label{6.4}
Suppose $ x_{m_{i-1}+1}^{\alpha_{i1}}\cdots x_{m_i}^{\alpha_{i,n_i}} $ generates a block basis for $ \mc{P}_i $ over $ \mc{P}_i^{G_i}. $ Then
$$ \prod_{i=1}^{l}\prod_{j=1}^{n_i} x_{m_{i-1}+j}^{(\alpha_{ij}+1)q^{m_{i-1}}-1} $$ generates a block basis of $ \mc{P} $ over $ \mc{P}^{G_I}. $
\end{theorem}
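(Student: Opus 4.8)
The plan is to transport the $\mc{P}^{G_I}$-module basis of $\mc{P}$ provided by the first lemma of this section across the congruences $v_{ij}\equiv x_{m_{i-1}+j}^{q^{m_{i-1}}}$ of the second lemma, and then to recognise the resulting set of monomials as exactly the set of monomial factors of the claimed generator.

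First I would invoke the first lemma: the indicated set $B$ of products $\prod_{k}x_k^{a_k}\prod_{i,j}v_{ij}^{b_{ij}}$ is a basis of $\mc{P}$ as a free $\mc{P}^{G_I}$-module, so its image in $\mc{P}/\mc{I}$ (where $\mc{I}=(\mc{P}^{G_I}_+)$, i.e. $\mc{P}/\mc{I}=\mc{P}_{G_I}$) is an $\ff_q$-basis. For $s=m_{i-1}+j$ with $1\le j\le n_i$ one has $m_{i-1}<s\le m_i$, hence $\tau(s)=m_{i-1}$; writing $a_{ij}$ for the exponent of $x_{m_{i-1}+j}$ coming from the $\prod_k x_k^{a_k}$ part, its range is $0\le a_{ij}<q^{m_{i-1}}$, while the exponent $b_{ij}$ of $v_{ij}$ runs over $0\le b_{ij}\le\alpha_{ij}$. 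Applying the second lemma (and multiplicativity of $\equiv$), the image in $\mc{P}/\mc{I}$ of a typical element of $B$ equals that of
$$\prod_{i=1}^{l}\prod_{j=1}^{n_i}x_{m_{i-1}+j}^{\,a_{ij}+b_{ij}q^{m_{i-1}}}.$$

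The combinatorial heart of the argument is that, for each fixed pair $(i,j)$, the assignment $(a_{ij},b_{ij})\mapsto c_{ij}:=a_{ij}+b_{ij}q^{m_{i-1}}$ is precisely the base-$q^{m_{i-1}}$ digit decomposition of $c_{ij}$; it is therefore a bijection from $\{0,\dots,q^{m_{i-1}}-1\}\times\{0,\dots,\alpha_{ij}\}$ onto $\{0,1,\dots,(\alpha_{ij}+1)q^{m_{i-1}}-1\}$. Taking the product of these bijections over all $(i,j)$ shows that $b\mapsto\prod_{i,j}x_{m_{i-1}+j}^{c_{ij}}$ is a bijection from $B$ onto the set of all monomial factors of $\prod_{i,j}x_{m_{i-1}+j}^{(\alpha_{ij}+1)q^{m_{i-1}}-1}$, and by the previous paragraph this bijection preserves images in $\mc{P}/\mc{I}$. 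Since the images of $B$ form an $\ff_q$-basis of $\mc{P}/\mc{I}$, the images of the monomial factors of $\prod_{i,j}x_{m_{i-1}+j}^{(\alpha_{ij}+1)q^{m_{i-1}}-1}$ are literally the same set of vectors and hence also form a basis, which is the assertion.

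There is no serious analytic content; the two places to be careful are the identification $\tau(m_{i-1}+j)=m_{i-1}$ (which is what makes the $x$-exponents and the $v$-exponents sit in complementary ``digit ranges'', so that the base-$q^{m_{i-1}}$ decomposition applies) and the bookkeeping of cardinalities — $|B|=\bigl(\prod_k q^{\tau(k)}\bigr)\prod_{i,j}(\alpha_{ij}+1)=\prod_{i,j}q^{m_{i-1}}(\alpha_{ij}+1)$ agrees with the number of monomial factors of the claimed generator — which guarantees that the image-preserving map constructed above is a genuine bijection rather than merely a surjection. Everything else is immediate from the two preceding lemmas.
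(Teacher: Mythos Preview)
Your argument is correct and is exactly the route the paper intends: the theorem is stated without a separate proof precisely because it is meant to follow immediately from the two preceding lemmas via the congruence $v_{ij}\equiv x_{m_{i-1}+j}^{q^{m_{i-1}}}$ and the base-$q^{m_{i-1}}$ recombination you describe. You have also silently corrected the paper's typos in the index ranges (it should be $0\le a_k<q^{\tau(k)}$ and $0\le b_{ij}\le\alpha_{ij}$), which is necessary for the bijection to work.
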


\subsection{Block basis for $ \mc{P}_{\GL_I} $}
If $ G_I=\GL_I, $ then $ G_i=\GL_{n_i}(q). $ By \cite[Theorem B]{St} (a generalization refers to \cite[Theorem 3.2]{CHSW}), the monomial $ \prod_{j=1}^{n_i}x_{m_{i-1}+j}^{q^{n_i}- q^{n_i-j}-1} $ generates a block basis for $ \mc{P}_i $ over $ \mc{P}_i^{\GL_{n_i}(p)}. $ The following proposition is a corollary of Theorem \ref{6.4}.
\begin{proposition}\label{6.5}
	The monomial $ \prod_{i=1}^{l}\prod_{j=1}^{n_i} x_{m_{i-1}+j}^{q^{m_i}- q^{m_i-j}-1} $ generates a block basis for $ \mc{P}_{\GL_I}. $
\end{proposition}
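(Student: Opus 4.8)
The plan is to obtain Proposition \ref{6.5} as a direct specialization of Theorem \ref{6.4}: once the block generators for the diagonal blocks $G_i=\GL_{n_i}(q)$ are identified, the statement follows by substitution and a short exponent computation. First I would invoke Steinberg's theorem. When $G_i=\GL_{n_i}(q)$, the ring $\mc{P}_i^{G_i}=\ff_q[x_{m_{i-1}+1},\dots,x_{m_i}]^{\GL_{n_i}(q)}$ is the Dickson algebra, and by \cite[Theorem B]{St} (equivalently \cite[Theorem 3.2]{CHSW}) the monomial $\prod_{j=1}^{n_i} x_{m_{i-1}+j}^{q^{n_i}-q^{n_i-j}-1}$ generates a block basis for $\mc{P}_i$ over $\mc{P}_i^{G_i}$. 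Hence the hypothesis of Theorem \ref{6.4} is satisfied with $\alpha_{ij}=q^{n_i}-q^{n_i-j}-1$, so that $\alpha_{ij}+1=q^{n_i}-q^{n_i-j}$.

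Next I would substitute these values into the block generator produced by Theorem \ref{6.4}. The exponent of $x_{m_{i-1}+j}$ in that generator is
\[
(\alpha_{ij}+1)\,q^{m_{i-1}}-1=\bigl(q^{n_i}-q^{n_i-j}\bigr)q^{m_{i-1}}-1=q^{\,n_i+m_{i-1}}-q^{\,n_i-j+m_{i-1}}-1.
\]
Using $m_i=m_{i-1}+n_i$ from Section 2, we have $n_i+m_{i-1}=m_i$ and $n_i-j+m_{i-1}=m_i-j$, so this exponent equals $q^{m_i}-q^{m_i-j}-1$, which is precisely the exponent in the statement. Since $G_I=\GL_I$ exactly when every $G_i=\GL_{n_i}(q)$ (so that $\mc{P}^{G_I}=\mc{P}^{\GL_I}$), Theorem \ref{6.4} then gives that $\prod_{i=1}^{l}\prod_{j=1}^{n_i} x_{m_{i-1}+j}^{q^{m_i}-q^{m_i-j}-1}$ generates a block basis for $\mc{P}_{\GL_I}$.

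There is no genuinely difficult step here, the result being a bookkeeping corollary of Theorem \ref{6.4}; the only care needed is in matching conventions. In particular, one should check that the specific arrangement of the exponents $d-1$ chosen in Steinberg's monomial matches the indexing $\alpha_{ij}=q^{n_i}-q^{n_i-j}-1$ used above — by Lemma \ref{critical} any permutation of these exponents still generates a block basis, so only the closed form of the answer is affected — and that the generators $q_{i,j}$ of $\mc{P}^{\GL_I}$ appearing in (\ref{KM}) have degrees $q^{m_i}-q^{m_i-j}$, so that the degree $\sum_{i,j}\bigl(q^{m_i}-q^{m_i-j}-1\bigr)$ of the proposed block generator agrees with the value $\bigl(\sum_{i,j}\deg q_{i,j}\bigr)-n$ predicted by Lemma \ref{critical}.
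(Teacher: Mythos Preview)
Your proposal is correct and follows exactly the paper's approach: the paper states Proposition \ref{6.5} as an immediate corollary of Theorem \ref{6.4} after invoking Steinberg's result \cite[Theorem B]{St} (equivalently \cite[Theorem 3.2]{CHSW}) to identify the block generator $\prod_{j=1}^{n_i}x_{m_{i-1}+j}^{q^{n_i}-q^{n_i-j}-1}$ for each $\mc{P}_i$ over $\mc{P}_i^{\GL_{n_i}(q)}$. Your explicit exponent computation $(\alpha_{ij}+1)q^{m_{i-1}}-1=q^{m_i}-q^{m_i-j}-1$ via $m_i=m_{i-1}+n_i$ simply spells out what the paper leaves implicit, and the additional degree sanity check is a harmless extra.
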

\begin{remark}
(1) Take $ I=(n), $ then $ l=1, n_1=m_1=n. $ Proposition provides a block basis for $ \mc{P}_{\GL_n(q)} $ which coincide with \cite[Theorem B]{St} and \cite[Theorem 3.2]{CHSW}.

(2) Take $ I=(1,\cdots,1), $ then $ l=n, m_i=i $ and the monomial $ \prod_{i=1}^{l} x_{i}^{q^{i}- q^{i-1}-1} $ generates a block basis for $ \mc{P}_{B} $ where $ B $ is a Borel subgroup of $ \GL_n(q) $ consisting of all upper triangle matrices.
\end{remark}

\subsection{Weyl groups of Cartan type Lie algebras}
By \cite{Je}, $ G_I $ is a Weyl group of restricted Cartan type Lie algebra $ \mf{g} $ of type $ W, S $ or $ H $ where  $ I=(n_1,n_2), $ 
$ G_I=\left\{
\left(\begin{matrix}
A & B\\ 0 &C
\end{matrix}\right)\ 
\vline\ A\in \GL_{n_1}(p),\ C\in G_2 \right\}, $
$ G_2=G(m,1,n_2) \text{ and } m=
\left\{
\begin{array}{ll}
1 &\text{ if }\lieg\text{ is of type } W \text{ or } S, \\
2 &\text{ if } \mf{g} \text{ is of type }  H.  
\end{array}
\right.
$

 By Theorem \ref{5.6}, the monomial $ \prod_{i=n_1+1}^{n} x_i^{m(n-i+1)-1} $ generates a block basis for $ \mc{P}_2 $ over $ \mc{P}_2^{G_2}. $ 

The following is a corollary of Theorem \ref{6.4}.
\begin{proposition}\label{6.7}
	The monomial $ x_1^{p^{n_1}-p^{n_1-1}-1}\cdots x_{n_1}^{p^{n_1}-2} x_{n_1+1}^{m(n_2+1)p^{n_1}-1}x_{n_1+2}^{mn_2p^{n_1}-1}\cdots x_n^{mp^{n_1}-1} $ generates a block basis for $ \mc{P} $ over $ \mc{P}^{G_I}. $ 
\end{proposition}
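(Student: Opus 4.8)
The plan is to derive this as an immediate corollary of Theorem \ref{6.4}, specialized to $l=2$ and $I=(n_1,n_2)$. All one has to supply is a block generator for each of the two diagonal blocks $(\mc{P}_1,\mc{P}_1^{G_1})$ and $(\mc{P}_2,\mc{P}_2^{G_2})$, and then substitute into the exponent formula of Theorem \ref{6.4}. Recall that in this situation $q=p$, $m_0=0$, $m_1=n_1$, $m_2=n$, $G_1=\GL_{n_1}(p)$ and $G_2=G(m,1,n_2)$.

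First I would treat the two blocks separately. For the first block, $G_1=\GL_{n_1}(p)$ acts on $\mc{P}_1=\ff_p[x_1,\dots,x_{n_1}]$, and Steinberg's theorem \cite[Theorem B]{St} (equivalently \cite[Theorem 3.2]{CHSW}) tells us that $\prod_{j=1}^{n_1}x_j^{p^{n_1}-p^{n_1-j}-1}$ generates a block basis for $\mc{P}_1$ over $\mc{P}_1^{G_1}$; so in the notation of Theorem \ref{6.4} we read off $\alpha_{1j}=p^{n_1}-p^{n_1-j}-1$. For the second block, $G_2=G(m,1,n_2)$ has $a=1$ and sits inside $\GL_{n_2}(p)$ as a pseudo-reflection subgroup, so Theorem \ref{5.6} applies and gives the block generator $\prod_{j=1}^{n_2}x_{n_1+j}^{m(n_2-j+1)-1}$ for $\mc{P}_2$ over $\mc{P}_2^{G_2}$; so $\alpha_{2j}=m(n_2-j+1)-1$.

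Next I would plug these into Theorem \ref{6.4}, according to which $\prod_{i=1}^{2}\prod_{j=1}^{n_i}x_{m_{i-1}+j}^{(\alpha_{ij}+1)p^{m_{i-1}}-1}$ generates a block basis for $\mc{P}$ over $\mc{P}^{G_I}$. For $i=1$ one has $p^{m_0}=1$, so the exponent of $x_j$ stays $\alpha_{1j}=p^{n_1}-p^{n_1-j}-1$, which is $p^{n_1}-p^{n_1-1}-1$ for $j=1$ and $p^{n_1}-2$ for $j=n_1$. For $i=2$ one has $p^{m_1}=p^{n_1}$, so the exponent of $x_{n_1+j}$ becomes $(\alpha_{2j}+1)p^{n_1}-1=m(n_2-j+1)p^{n_1}-1$, running through $mn_2p^{n_1}-1,\ m(n_2-1)p^{n_1}-1,\ \dots,\ mp^{n_1}-1$ as $j$ goes from $1$ to $n_2$. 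Assembling all the factors produces the asserted monomial.

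I do not expect any genuine obstacle here, since the whole argument is a substitution into Theorem \ref{6.4}. The only points that need a little care are keeping the two indexings straight (the within-block index $j$ versus the global variable index $m_{i-1}+j$), correctly using $q=p$ together with $m_0=0$ and $m_1=n_1$, and confirming that the monomials imported from \cite{St} and from Theorem \ref{5.6} really are block generators for the pairs $(\mc{P}_i,\mc{P}_i^{G_i})$ that enter the hypothesis of Theorem \ref{6.4} — which, for $G_1=\GL_{n_1}(p)$ and $G_2=G(m,1,n_2)$ with $a=1$, they are.
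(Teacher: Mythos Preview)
Your approach is exactly the paper's: it states Proposition \ref{6.7} as a corollary of Theorem \ref{6.4}, using Steinberg's block generator for $G_1=\GL_{n_1}(p)$ and Theorem \ref{5.6} for $G_2=G(m,1,n_2)$, just as you do.

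One small point: your computation for the second block gives the exponent of $x_{n_1+1}$ as $mn_2p^{n_1}-1$, not $m(n_2+1)p^{n_1}-1$ as printed in the proposition. Your value is the correct one: the paper's own sentence preceding Proposition \ref{6.7} records $\alpha_{2j}=m(n-i+1)-1$ with $i=n_1+j$, so $\alpha_{21}=mn_2-1$ and $(\alpha_{21}+1)p^{n_1}-1=mn_2p^{n_1}-1$. The sequence $m(n_2+1),mn_2,\dots,m$ in the stated exponent list would also have $n_2+1$ terms for only $n_2$ variables, so the printed $m(n_2+1)$ is a typo for $mn_2$. Your final line ``produces the asserted monomial'' should therefore be amended to note this correction rather than claim literal agreement.
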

\noindent
\textbf{Acknowledgments.}
This work is supported by Fundamental Research Funds of Yunnan Province (No. 2020J0375), the Fundamental Research Funds of YNUFE (No. 80059900196).

\end{document}